\newtheorem{theorem}{Theorem}
\theoremstyle{plain}
\newtheorem{conjecture}{Conjecture}
\newtheorem{corollary}{Corollary}
\newtheorem{lemma}{Lemma}
\newtheorem{problem}{Problem}
\newtheorem{remark}{Remark}
\numberwithin{equation}{section}
\begin{document}
\title[Convexity and monotonicity for the elliptic integrals]{Convexity and
monotonicity for the elliptic integrals of the first kind and applications}
\author{Zhen-Hang Yang and Jingfeng Tian*}
\address{Zhen-Hang Yang, College of Science and Technology\\
North China Electric Power University, Baoding, Hebei Province, 071051, P.
R. China and Department of Science and Technology, State Grid Zhejiang
Electric Power Company Research Institute, Hangzhou, Zhejiang, 310014, China}
\email{yzhkm\symbol{64}163.com}
\address{Jingfeng Tian\\
College of Science and Technology\\
North China Electric Power University\\
Baoding, Hebei Province, 071051, P. R. China}
\email{tianjf\symbol{64}ncepu.edu.cn}
\thanks{*Corresponding author: Jingfeng Tian, e-mail: tianjf\symbol{64}%
ncepu.edu.cn}
\subjclass[2010]{33C05, 33E05, 30C62}
\keywords{Elliptic integral, hypergeometric function, concave, convex,
inequality}
\thanks{This work was supported by the Fundamental Research Funds for the
Central Universities (No. 2015ZD29) and the Higher School Science Research
Funds of Hebei Province of China (No. Z2015137).}

\begin{abstract}
The elliptic integral and its various generalizations are playing very
important and basic role in different branches of modern mathematics. It is
well known that they cannot be represented by the elementary transcendental
functions. Therefore, there is a need for sharp computable bounds for the
family of integrals. In this paper, by virtue of two new tools, we study
monotonicity and convexity of certain combinations of the complete elliptic
integrals of the first kind, and obtain new sharp bounds and inequalities
for them. In particular, we prove that the function $\mathcal{K}\left( \sqrt{%
x}\right) /\ln \left( c/\sqrt{1-x}\right) $ is concave on $\left( 0,1\right) 
$ if and only if $c=e^{4/3}$, where $\mathcal{K}$ denotes the complete
elliptic integrals of the first kind.
\end{abstract}

\maketitle


\section{Introduction}

For $r\in (0,1)$, Legendre's complete elliptic integrals $\mathcal{K}(r)$
and $\mathcal{E}\left( r\right) $ \cite{Byrd-HEIES-1971} of the first kind
and second kind are defined by%
\begin{eqnarray*}
\mathcal{K}(r) &=&\int_{0}^{\pi /2}\frac{dt}{\sqrt{1-r^{2}\sin ^{2}(t)}}, \\
\mathcal{E}(r) &=&\int_{0}^{\pi /2}\sqrt{1-r^{2}\sin ^{2}(t)}dt,
\end{eqnarray*}%
respectively. The Legendre's complete elliptic integrals play a very
important and basic role in different branches of modern mathematics such as
classical real and complex analysis, number theory, geometric function
theory, quasiconformal mappings and analysis. Motivated by the importance of
elliptic integrals, many noteworthy monotonicity and convexity properties of 
$\mathcal{K}(r)$ and $\mathcal{E}(r)$ have been obtained, for example, in 
\cite{Hancock-EI-1958,Carlson-SFAM-1977,Borwein-PA-1987,Almkvist-AMM-95-1988}%
.

As is well known, the Legendre's complete elliptic integrals cannot be
represented by the elementary transcendental functions. Therefore, there is
a need for sharp computable bounds for the family of integrals. The goal of
this paper is to study monotonicity and convexity of certain combinations of
the complete elliptic integrals of the first kind, and establish some new
bounds and inequalities for them.

The elliptic integrals occur in the formulas for the moduli of plane ring
domains in theory of quasiconformal mappings. In particular, for $r\in (0,1)$%
, the special combinations%
\begin{equation*}
\mu \left( r\right) =\frac{\pi }{2}\frac{\mathcal{K}\left( r^{\prime
}\right) }{\mathcal{K}\left( r\right) }
\end{equation*}%
the so-called modulus of the Gr\"{o}tzsch extremal ring, is indispensable in
the study of quasiconformal distortion \cite%
{Anderson-TMJ023-1971,Lehto-GMW-126-1973}.

It is known that the complete elliptic integrals are the particular cases of
the Gaussian hypergeometric function%
\begin{equation}
F(a,b;c;x)=\sum_{n=0}^{\infty }\frac{(a)_{n}(b)_{n}}{(c)_{n}}\frac{x^{n}}{n!}%
\quad (-1<x<1),  \label{Gh}
\end{equation}%
where $(a)_{n}=\Gamma (a+n)/\Gamma (a)$ and $\Gamma $ is the classical gamma
function. Indeed, we have 
\begin{equation*}
\mathcal{K}(r)=\frac{\pi }{2}F\left( \frac{1}{2},\frac{1}{2};1;r^{2}\right) 
\text{ \ and\ \ }\mathcal{E}(r)=\frac{\pi }{2}F\left( \frac{1}{2},-\frac{1}{2%
};1;r^{2}\right) .
\end{equation*}

The hypergeometric function $F(a;b;c;x)$ has the simple differentiation
formulas%
\begin{equation}
F^{\prime }\left( a,b;c;x\right) =\frac{ab}{c}F\left( a+1,b+1;c+1;x\right) .
\label{df}
\end{equation}%
The behavior of the hypergeometric function near $x=1$ in the three cases $%
a+b<c$, $a+b=c$, and $a+b>c$, $a,b,c>0$, is given by%
\begin{equation}
\left\{ 
\begin{array}{lc}
\bigskip F\left( a,b;c;1\right) =\tfrac{\Gamma \left( c\right) \Gamma \left(
c-a-b\right) }{\Gamma \left( c-a\right) \Gamma \left( c-b\right) } & \text{%
if }c>a+b\text{,} \\ 
\bigskip F\left( a,b;c;x\right) =\tfrac{R\left( a,b\right) -\ln \left(
1-x\right) }{B\left( a,b\right) }+O\left( \left( 1-x\right) \ln \left(
1-x\right) \right) & \text{if }c=a+b\text{,} \\ 
F\left( a,b;c;x\right) =(1-x)^{c-a-b}F(c-a,c-b;c;x) & \text{if }c<a+b\text{,}%
\end{array}%
\right.  \label{F-near1}
\end{equation}%
where $B\left( a,b\right) =\Gamma \left( a\right) \Gamma \left( b\right)
/\Gamma \left( a+b\right) $ is the beta function and%
\begin{equation}
R\left( a,b\right) =-2\gamma -\psi \left( a\right) -\psi \left( b\right) 
\text{,}  \label{R(a,b)}
\end{equation}%
here $\psi \left( z\right) =\Gamma ^{\prime }\left( z\right) /\Gamma \left(
z\right) $, $\Re \left( z\right) >0$ and $\gamma $ is the Euler-Mascheroni
constant (see \cite{Abramowitz-HMFFGMT-NY-1965}). In particular, from the
second formula in (\ref{F-near1}) it is easy to obtain the following
asymptotic formula%
\begin{equation*}
\mathcal{K}(r)=\frac{\pi }{2}F\left( \frac{1}{2},\frac{1}{2};1;r^{2}\right)
\thicksim \ln \frac{4}{r^{\prime }},\text{ \ as }r\rightarrow 1^{-},
\end{equation*}%
where and in what follows $r^{\prime }=\sqrt{1-r^{2}}$.

Motivated by the asymptotic formula for $\mathcal{K}(r)$, many comparison
inequalities for $\mathcal{K}(r)$ with $\ln \left( 4/r^{\prime }\right) $
were established, for example, in \cite%
{Borwein-SIAM-R-26-1984,Carlson-SIAM-JMA-16-1985,Anderson-SIAM-JMA-21-1990,Anderson-SIAM-JMA-23-1992, Qiu-JHIEE-3-1993,Kuhnau-ZAMM-74-1994,Qiu-SIAM-JMA-27(3)-1996, Anderson-CA-14-1998,Alzer-MPCPS-124(2)-1998,Qiu-SIAM-JMA-29-1998, Alzer-JCAM-172-2004,Alzer-AM-14-2015}%
. Some of new generalizations can be found in \cite%
{Baricz-MZ-256(4)-2007,Heikkala-JMAA-338(1)-2008,Heikkala-CMFT-9(1)-2009,Zhang-PRSES-A-139(2)-2009, Baricz-PAMS-142(9)-2014,Wang-MIA-16(3)-2013,Wang-JMAA-429-2015}%
.

In particular, Anderson, Vamanamurthy and Vuorinen \cite[Theorem 2.2]%
{Anderson-SIAM-JMA-21-1990}\ showed that the ratio $\mathcal{K}(r)/\ln
(c/r^{\prime })$ is strictly decreasing if and only if $0<c\leq 4$ (strictly
speaking, $c$ should be in $\left[ 1,4\right] $) and strictly increasing if
and only if $c\geq e^{2}$. It is natural to ask that what are the conditions
such that this function is concave or convex on $\left( 0,1\right) $? The
first aim of this paper is to give an answer for concavity.

\begin{theorem}
\label{MT-1}Let $c\geq 1$. The function%
\begin{equation}
x\mapsto Q_{1}\left( x\right) =\frac{\mathcal{K}\left( \sqrt{x}\right) }{\ln
\left( c/\sqrt{1-x}\right) }  \label{Q1}
\end{equation}%
is strictly concave on $\left( 0,1\right) $ if and only if $c=e^{4/3}$.
\end{theorem}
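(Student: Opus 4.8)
The plan is to turn the concavity of \(Q_1\) into a sign condition on a single auxiliary function, then to separate the ``if'' and ``only if'' directions, which behave very differently. Throughout write \(K(x)=\mathcal{K}(\sqrt{x})=\tfrac{\pi}{2}F(\tfrac12,\tfrac12;1;x)\) and \(L(x)=\ln\!\left(c/\sqrt{1-x}\right)=\ln c-\tfrac12\ln(1-x)\), so that \(Q_1=K/L\) and \(L>0\) on \((0,1)\) because \(c\ge1\). A direct differentiation, using \(L'=\tfrac1{2(1-x)}\) and the convenient identity \(L''=2(L')^{2}\), shows that \(Q_1''=N/L^{3}\) with
\[ N(x)=K''L^{2}-KL''L-2K'L'L+2K(L')^{2}; \]
since \(L^{3}>0\), concavity is exactly the statement \(N<0\) on \((0,1)\).

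For the ``only if'' direction I would expand \(N\) at the left endpoint. From \(F(\tfrac12,\tfrac12;1;x)=1+\tfrac14x+\tfrac{9}{64}x^{2}+\cdots\) and \(L=\beta+\tfrac12x+\tfrac14x^{2}+\cdots\) with \(\beta=\ln c\) (the case \(c=1\), where \(Q_1\sim\pi/x\) near \(0\), is not concave and is set aside), one reads off \(K(0)=\tfrac\pi2,\ K'(0)=\tfrac\pi8,\ K''(0)=\tfrac{9\pi}{64}\) and \(L(0)=\beta,\ L'(0)=L''(0)=\tfrac12\). Substituting and collecting terms gives the clean factorisation \(N(0^{+})=\tfrac{\pi}{64}(3\beta-4)^{2}\), hence \(Q_1''(0^{+})=\pi(3\beta-4)^{2}/(64\beta^{3})\ge0\). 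Strict concavity forces \(Q_1''\le0\) on \((0,1)\) and therefore \(Q_1''(0^{+})\le0\); combined with the nonnegativity just found, the perfect square must vanish, giving \(\beta=4/3\), i.e. \(c=e^{4/3}\). This second-derivative test at \(x=0\) is what manufactures the unusual constant: it produces a perfect square in \(\ln c\) whose only root is at \(4/3\).

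For the ``if'' direction I fix \(c=e^{4/3}\) and must prove \(N<0\) on all of \((0,1)\). First I remove the derivatives of \(K\): the hypergeometric equation \(x(1-x)K''+(1-2x)K'-\tfrac14K=0\) together with \(K'=\bigl(E-(1-x)K\bigr)/\bigl(2x(1-x)\bigr)\), where \(E(x)=\mathcal{E}(\sqrt{x})\), recasts \(N\) as
\[ 4x^{2}(1-x)^{2}N=\mathcal{A}(x)K-\mathcal{C}(x)E, \]
with \(\mathcal{A}=(1-x)(2-3x)L^{2}+2x(1-2x)L+2x^{2}\) and \(\mathcal{C}=2L\bigl[(1-2x)L+x\bigr]\). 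Dividing by \(K>0\) and writing \(w=E/K=\mathcal{E}/\mathcal{K}\), concavity becomes equivalent to \(\Phi(x):=\mathcal{C}(x)\,w(x)-\mathcal{A}(x)>0\) on \((0,1)\). Because \(E(0)=K(0)=\tfrac\pi2\) and \(\mathcal{A}(0)=\mathcal{C}(0)=2\beta^{2}\), one has \(\Phi(0)=0\); and as \(x\to1^{-}\) a short computation gives the finite limit \(\Phi\to 2\ln(4/c)\), which is positive precisely because \(e^{4/3}<4\). I would establish \(\Phi>0\) by inserting a sufficiently sharp lower bound for the decreasing quotient \(w=\mathcal{E}/\mathcal{K}\)—the second of the paper's two tools—or, more uniformly, by applying the paper's monotonicity criterion to an auxiliary quotient built from \(\Phi\) that vanishes at \(x=0\).

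The hard part will be this inequality \(\Phi>0\). Since \(\Phi(0)=0\), it is tight at the left endpoint, so no crude estimate for \(\mathcal{E}/\mathcal{K}\) can close the gap; only an endpoint-sharp bound will do. Thus the genuine work is to organise \(\Phi\)—whose coefficients carry the logarithm through \(L\)—so that the monotonicity tool reduces \(\Phi>0\) to the sign of an explicit, \(L\)-free expression, and then to verify that sign throughout \((0,1)\). Handling the single change of sign of \(\mathcal{C}\) on \((0,1)\) cleanly, so that one monotone quotient can serve on the whole interval, is the delicate point I expect to absorb most of the effort.
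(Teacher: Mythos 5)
Your ``only if'' direction is complete and correct, and it is in fact the same argument as the paper's: both evaluate the second derivative at the left endpoint and obtain a perfect square, your $Q_{1}''(0^{+})=\pi\left(3\ln c-4\right)^{2}/\left(64\ln^{3}c\right)$ agreeing exactly with the paper's $\frac{2}{\pi}Q_{1}''(0^{+})=\frac{1}{32}\frac{\left(3\ln c-4\right)^{2}}{\ln^{3}c}$, so strict concavity forces $\ln c=4/3$. Your reduction of the ``if'' direction is also algebraically sound and genuinely different in spirit from the paper: I checked that with $L''=2(L')^{2}$, the hypergeometric equation $x(1-x)K''+(1-2x)K'-\frac{1}{4}K=0$, and $K'=\bigl(E-(1-x)K\bigr)/\bigl(2x(1-x)\bigr)$ one indeed gets $4x^{2}(1-x)^{2}N=\mathcal{A}K-\mathcal{C}E$ with your $\mathcal{A}$ and $\mathcal{C}$, and your endpoint limits $\Phi(0^{+})=0$ and $\Phi(1^{-})=2\ln(4/c)>0$ are correct. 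The paper, by contrast, never introduces $\mathcal{E}$ at all: it applies the $H_{f,g}$ device twice to $Q_{1}'$, bounds $g_{2}/g_{2}'$ below by the rational function $-\frac{8}{5}(1-x)$ to strip out the logarithm, and then fights a power-series battle.

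The genuine gap is that you never prove the one inequality everything rests on, namely $\Phi=\mathcal{C}\,w-\mathcal{A}>0$ on $(0,1)$ with $w=\mathcal{E}/\mathcal{K}$; you explicitly defer it (``I would establish\ldots'', ``the hard part will be\ldots''), and that deferred step is the entire mathematical content of the sufficiency half. Moreover the difficulty is worse than your closing paragraph suggests: since $\Phi=-4x^{2}(1-x)^{2}N/K$ and $N(0^{+})$ is precisely the perfect square that vanishes at $c=e^{4/3}$, the function $\Phi$ has a zero of order at least \emph{three} at $x=0$, so any lower bound for $w$ must reproduce the Maclaurin expansion of $\mathcal{E}/\mathcal{K}$ through order $x^{2}$ while simultaneously surviving the sign change of $\mathcal{C}$ on $(0,1)$ and remaining sharp enough near $x=1$, where the margin is only $2\ln(4/e^{4/3})$; no standard bound for $\mathcal{E}/\mathcal{K}$ does all of this. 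The paper's proof shows the scale of the missing work: after the two $H_{f,g}$ reductions it expands $f_{4}$ as a power series whose coefficients $\alpha_{n}$ involve a Cauchy convolution with the $-\ln(1-x)$ series, proves the recurrence $\beta_{n+1}-\lambda_{n}\beta_{n}$ of its Lemma 4 (which itself requires closed-form evaluation of three Wallis-ratio sums $\phi_{0},\phi_{1},\phi_{2}$ via hypergeometric identities), deduces $\alpha_{n+1}-\lambda_{n}\alpha_{n}>0$ for $n\geq2$ so that $\alpha_{n}>0$ for $n\geq4$, and closes with its sign criterion (Lemma 3) applied twice. Until you supply an argument of comparable force for $\Phi>0$ --- or verify that its piecewise-monotonicity tool applies to a quotient built from your $\Phi$ --- the ``if'' half of your submission is a plan, not a proof.
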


\begin{remark}
According to \cite[Theorem 2.2]{Anderson-SIAM-JMA-21-1990}, $Q_{1}$ is
decreasing on $\left( 0,1\right) $ for $c=e^{4/3}\in (0,4]$. Also, by the
properties of positive concave functions, we easily see that $Q_{1}$ is
log-concave and $1/Q_{1}$ is convex on $\left( 0,1\right) $.
\end{remark}

In 1992, Anderson, Vamanamurthy and Vuorinen \cite{Anderson-SIAM-JMA-23-1992}
conjectured that the inequality%
\begin{equation}
\mathcal{K}\left( r\right) <\ln \left( 1+\frac{4}{r^{\prime }}\right)
-\left( \ln 5-\frac{\pi }{2}\right) \left( 1-r\right)  \label{K<AVV}
\end{equation}%
holds for $r\in \left( 0,1\right) $, which was proved in \cite%
{Qiu-SIAM-JMA-29-1998} by Qiu, Vamanamurthy and Vuorinen. We remark that as
an approximation for $\mathcal{K}\left( r\right) $, the function $\ln \left(
1+4/r^{\prime }\right) $ is obviously better than $\ln \left( 4/r^{\prime
}\right) $. Therefore, the second aim of this paper is to further vestigate
the monotonicity of the ratio $\mathcal{K}\left( r\right) /\ln \left(
1+4/r^{\prime }\right) $ and the convexity of the difference $\mathcal{K}%
\left( r\right) -\ln \left( 1+4/r^{\prime }\right) $. We will prove the
following two theorems.

\begin{theorem}
\label{MT-2}The function%
\begin{equation}
r\mapsto Q_{2}\left( r\right) =\frac{\mathcal{K}\left( r\right) }{\ln \left(
1+4/r^{\prime }\right) }  \label{Q2}
\end{equation}%
is strictly increasing from $\left( 0,1\right) $ onto $\left( \pi /\ln
25,1\right) $. Consequently, the double inequality%
\begin{equation*}
\frac{\pi }{2\ln 5}\ln \left( 1+\frac{4}{r^{\prime }}\right) <\mathcal{K}%
\left( r\right) <\ln \left( 1+\frac{4}{r^{\prime }}\right)
\end{equation*}%
holds with the best constants $\pi /\ln 25\approx 0.98$ and $1$.
\end{theorem}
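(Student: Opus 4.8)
The plan is to reduce the monotonicity of $Q_{2}$ to a sign condition via the quotient-derivative identity. Writing $f(r)=\mathcal{K}(r)$ and $g(r)=\ln(1+4/r')$, one checks $g'(r)>0$ on $(0,1)$, so the sign of $Q_{2}'$ coincides with that of the auxiliary function $H(r)=(f'/g')g-f$. The first task is to compute $f'/g'$ in closed form. Using the classical formula $\mathcal{K}'(r)=(\mathcal{E}-r'^{2}\mathcal{K})/(rr'^{2})$ together with the identity $\mathcal{E}(r)-r'^{2}\mathcal{K}(r)=\tfrac{\pi}{4}r^{2}F(\tfrac12,\tfrac12;2;r^{2})$ (which follows from the series expansions, or from contiguous relations), I obtain the clean expression
\[
\frac{f'(r)}{g'(r)}=\frac{\pi}{16}(r'+4)\,F\!\left(\tfrac12,\tfrac12;2;r^{2}\right)=:\frac{\pi}{16}\phi(r).
\]

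The key structural observation is that $\phi$ is \emph{not} monotone: since $r'+4$ decreases while $F(\tfrac12,\tfrac12;2;r^{2})$ increases, the differentiation formula (\ref{df}) and the near-$1$ behaviour (\ref{F-near1}) (which forces $F(\tfrac32,\tfrac32;3;r^{2})\to\infty$ while $F(\tfrac12,\tfrac12;2;1)=4/\pi$) give $\phi'(0^{+})>0$ but $\phi'(r)\to-\infty$ as $r\to1^{-}$. Hence $f'/g'$ increases and then decreases, ruling out a direct application of the monotone form of l'Hôpital's rule. The remedy is the identity $H'=(f'/g')'g$: because $g>0$, $H$ inherits the sign pattern of $(f'/g')'$, so once $\phi'$ is shown to change sign exactly once (from $+$ to $-$), $H$ is unimodal, increasing then decreasing on $(0,1)$. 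For such a function the infimum is attained at the endpoints, so it suffices to verify $H(0^{+})\ge0$ and $H(1^{-})\ge0$ to conclude $H>0$ throughout, whence $Q_{2}'>0$.

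The main obstacle is establishing that $\phi'$ has exactly one sign change. After clearing denominators this amounts to showing that $L(r)-R(r)$ vanishes exactly once, where $L(r)=r'(r'+4)/4$ is elementary and strictly decreasing and $R(r)=F(\tfrac12,\tfrac12;2;r^{2})/F(\tfrac32,\tfrac32;3;r^{2})$. I would prove this by showing every zero of $L-R$ is a downcrossing, controlling $R'$ through the power-series ratio tool (a quotient of two power series whose ratio of coefficients is monotone is itself monotone) applied to $F(\tfrac12,\tfrac12;2;\cdot)$ and $F(\tfrac32,\tfrac32;3;\cdot)$; together with existence of a crossing (from $L(0)-R(0)=\tfrac14>0$ and $L-R<0$ near $1$) this yields exactly one sign change. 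This hypergeometric-ratio analysis, rather than any single clean inequality, is the technical heart of the proof.

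The remaining steps are comparatively routine. For the endpoints of $H$ I would compute $H(0^{+})=\tfrac{5\pi}{16}\ln5-\tfrac{\pi}{2}$, which is barely positive, and verify $H(1^{-})\ge0$ from the refined asymptotics $\mathcal{K}(r)=\ln(4/r')+O(r'^{2}\ln(1/r'))$ together with $\tfrac{\pi}{16}\phi(r)\to1^{+}$; the narrowness of the margin here mirrors the tightness of the theorem. Finally, to pin down the range I would pass to the limits: as $r\to0^{+}$, $\mathcal{K}(0)=\pi/2$ and $\ln(1+4/r')\to\ln5$ give $Q_{2}(0^{+})=\pi/(2\ln5)=\pi/\ln25$, while as $r\to1^{-}$ the asymptotics $\mathcal{K}(r)\sim\ln(4/r')$ and $\ln(1+4/r')\sim\ln(4/r')$ give $Q_{2}(1^{-})=1$. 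Strict monotonicity then identifies the range as $(\pi/\ln25,1)$, and the stated double inequality is immediate.
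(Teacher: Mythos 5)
Your architecture is the paper's: the same auxiliary function $H_{f,g}=(f'/g')g-f$ with $(f/g)'=(g'/g^{2})H_{f,g}$ and $H_{f,g}'=(f'/g')'g$, the same closed form $f'/g'=\tfrac{\pi}{16}(r'+4)F\left(\tfrac12,\tfrac12;2;r^{2}\right)$ (the paper rationalizes it as $(15+x)F\left(\tfrac12,\tfrac12;2;x\right)/\bigl(8(4-\sqrt{1-x})\bigr)$ with $x=r^{2}$), the same unimodality-of-$H$ strategy, and the same endpoint values: your $H(0^{+})=\tfrac{5\pi}{16}\ln 5-\tfrac{\pi}{2}$ is exactly $\tfrac{\pi}{2}$ times the paper's $\tfrac{5}{8}\ln 5-1$, and $H(1^{-})=0$ matches. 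The genuine gap sits at the step you yourself call the technical heart: showing that $(f'/g')'$ changes sign exactly once. Your reduction to a single sign change of $L-R$, with $L=r'(r'+4)/4$ and $R=F\left(\tfrac12,\tfrac12;2;r^{2}\right)/F\left(\tfrac32,\tfrac32;3;r^{2}\right)$, is correct, as are the endpoint signs; but the tool you invoke to prove that every zero of $L-R$ is a downcrossing cannot do the job. The coefficient-ratio rule applied to that pair of hypergeometric series (the ratios $\tfrac{n+2}{2(2n+1)^{2}}$ decrease) yields only the sign of $R'$, namely that $R$ is strictly decreasing. A downcrossing at a zero $x^{*}$ requires $L'(x^{*})<R'(x^{*})$, a quantitative comparison between two negative derivatives --- an upper bound on $|R'|$ against $|L'|$ --- and monotonicity of $R$ provides no such bound. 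Since $L$ is itself strictly decreasing, knowing that both $L$ and $R$ decrease is compatible with arbitrarily many crossings, so your plan as written does not close.

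The paper closes exactly this step with a different device, Lemma \ref{L-A/B-pm}: write $f'/g'=f_{1}/g_{1}$ with $f_{1}(x)=(15+x)F\left(\tfrac12,\tfrac12;2;x\right)=\sum a_{n}x^{n}$ and $g_{1}(x)=8(4-\sqrt{1-x})=\sum b_{n}x^{n}$, both with positive coefficients; check that $a_{n}/b_{n}$ increases for $0\le n\le 1$ and decreases for $n\ge 1$ (an elementary computation with the Wallis ratio $W_{n}$); and compute $H_{f_{1},g_{1}}(1^{-})=-64/\pi<0$. The lemma then delivers in one stroke that $f'/g'$ is strictly increasing on $(0,x_{1})$ and strictly decreasing on $(x_{1},1)$ --- precisely the unimodality you need, with no pointwise derivative comparison. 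If you substitute this argument (or supply an honest quantitative bound on $R'$), the rest of your proposal goes through, with two caveats. First, for $H(1^{-})=0$ you need the second-order expansion $F\left(\tfrac12,\tfrac12;2;x\right)=\tfrac{4}{\pi}-\tfrac{t}{\pi}\left(\ln(16/t)-3\right)+O(t^{2}\ln t)$, $t=1-x$ (the paper's Lemma \ref{L-af-1,2,3}): since $g\to\infty$, one must know $\tfrac{\pi}{16}(r'+4)F\left(\tfrac12,\tfrac12;2;r^{2}\right)-1=O(r')$ for the product to vanish, and the bare limit statement that this factor tends to $1^{+}$ is not enough. Second, a small slip: you attribute $\phi'\to-\infty$ to the divergence of $F\left(\tfrac32,\tfrac32;3;r^{2}\right)$, but that term enters $\phi'$ with a positive sign; the blow-up comes from $-(r/r')F\left(\tfrac12,\tfrac12;2;r^{2}\right)\thicksim -4/(\pi r')$, which dominates the merely logarithmic growth of the divergent hypergeometric term. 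Your limit computations for the range $(\pi/\ln 25,1)$ are fine.
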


\begin{theorem}
\label{MT-3}The function%
\begin{equation}
D\left( x\right) =\mathcal{K}\left( \sqrt{x}\right) -\ln \left( 1+\frac{4}{%
\sqrt{1-x}}\right)  \label{D}
\end{equation}%
is strictly convex on $\left( 0,1\right) $.
\end{theorem}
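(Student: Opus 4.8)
The plan is to prove strict convexity by establishing $D''(x)>0$ on $\left(0,1\right)$. Writing $\mathcal{K}\left(\sqrt{x}\right)=\tfrac{\pi}{2}F\left(\tfrac12,\tfrac12;1;x\right)$ and differentiating twice by means of the formula (\ref{df}), I first obtain
\begin{equation*}
\frac{d^{2}}{dx^{2}}\mathcal{K}\left(\sqrt{x}\right)=\frac{9\pi}{64}F\left(\tfrac{5}{2},\tfrac{5}{2};3;x\right).
\end{equation*}
A direct computation for the logarithmic term, using $t=\sqrt{1-x}$, gives the explicit expression
\begin{equation*}
\frac{d^{2}}{dx^{2}}\ln\left(1+\frac{4}{\sqrt{1-x}}\right)=\frac{3\sqrt{1-x}+8}{\left(1-x\right)^{2}\left(\sqrt{1-x}+4\right)^{2}},
\end{equation*}
so that $D''$ is the difference of these two quantities.

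The next step is to expose the common singularity at $x=1$. Since $5/2+5/2>3$, Euler's transformation (the third case of (\ref{F-near1}), valid as an identity on $\left(0,1\right)$) yields $F\left(\tfrac52,\tfrac52;3;x\right)=\left(1-x\right)^{-2}F\left(\tfrac12,\tfrac12;3;x\right)$. Hence $D''\left(x\right)=\left(1-x\right)^{-2}h\left(x\right)$, where
\begin{equation*}
h\left(x\right)=\frac{9\pi}{64}F\left(\tfrac12,\tfrac12;3;x\right)-\frac{3\sqrt{1-x}+8}{\left(\sqrt{1-x}+4\right)^{2}},
\end{equation*}
and it remains to prove $h>0$ on $\left(0,1\right)$. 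The factored form already reveals why the inequality is delicate: the two leading singular coefficients cancel exactly, so that $h\left(1^{-}\right)=0$, while at the other end $h\left(0\right)=\tfrac{9\pi}{64}-\tfrac{11}{25}=\tfrac{225\pi-704}{1600}$ is positive but extremely small (it holds precisely because $225\pi>704$).

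Because $h$ vanishes at $x=1$ and is only barely positive at $x=0$, no crude bound will suffice; I would instead prove that $h$ is \emph{unimodal}, increasing and then decreasing. Writing $h=p-q$ with $p=\tfrac{9\pi}{64}F\left(\tfrac12,\tfrac12;3;x\right)$ and $q=\tfrac{3\sqrt{1-x}+8}{\left(\sqrt{1-x}+4\right)^{2}}$, one computes $p'=\tfrac{3\pi}{256}F\left(\tfrac32,\tfrac32;4;x\right)$ and $q'=\tfrac{3\sqrt{1-x}+4}{2\sqrt{1-x}\left(\sqrt{1-x}+4\right)^{3}}$. At the endpoints $h'\left(0\right)=\tfrac{3\pi}{256}-\tfrac{7}{250}>0$, while, since $q'\to+\infty$ and $p'\to\tfrac18$ as $x\to1^{-}$, one has $h'\left(1^{-}\right)=-\infty$. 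The crux is therefore to show that $h'$ changes sign exactly once, which I would deduce by proving that the ratio $q'/p'$ is strictly increasing (from a value below $1$ at $x=0$ up to $+\infty$), so that $q'/p'=1$ at a single interior point. Granting this, $h$ increases and then decreases, whence $\min_{\left[0,1\right]}h=\min\left(h\left(0\right),h\left(1\right)\right)=0$ is attained only at $x=1$; consequently $h>0$ on $\left(0,1\right)$ and thus $D''>0$.

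The main obstacle is exactly this monotonicity of $q'/p'$: the factor $p'$ is a hypergeometric series natural in the variable $x$, whereas $q'$ is algebraic and natural in $t=\sqrt{1-x}$, so the two cannot be compared coefficientwise in a single variable without work. I would attack it by substituting $t=\sqrt{1-x}$ throughout, reducing $q'/p'$ to a ratio of two power series in $t$, and then applying a monotonicity rule for ratios of power series of the kind tailored to these problems: it then suffices to verify that the associated quotient of coefficients is monotone. Checking that coefficient condition, together with the sharp endpoint estimate $225\pi>704$, is where the genuine computational effort lies; everything else is the bookkeeping above.
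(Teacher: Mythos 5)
Your setup is sound and, up to an inessential factor, identical to the paper's: the two second derivatives are computed correctly, the Euler transformation $F\left(\frac{5}{2},\frac{5}{2};3;x\right)=\left(1-x\right)^{-2}F\left(\frac{1}{2},\frac{1}{2};3;x\right)$ is the same device the paper uses, your $h$ is the paper's $h$ divided by $\left(x+15\right)^{2}=\left(4+\sqrt{1-x}\right)^{2}\left(4-\sqrt{1-x}\right)^{2}$, and the endpoint data $h\left(0\right)=\left(225\pi-704\right)/1600>0$, $h\left(1^{-}\right)=0$, $h^{\prime }\left(0\right)=\frac{3\pi }{256}-\frac{7}{250}>0$, $h^{\prime }\left(1^{-}\right)=-\infty $ all check out. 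But the proof is not complete: everything hinges on the claim that $q^{\prime }/p^{\prime }$ is strictly increasing (equivalently, that $h^{\prime }$ changes sign exactly once), and you only announce a plan for proving it. With $h\left(1^{-}\right)=0$ and $h\left(0\right)$ barely positive, this unimodality \emph{is} the entire content of the theorem, so what you have is a genuine gap rather than omitted bookkeeping.

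Moreover, the specific route you sketch would fail as stated. Substituting $t=\sqrt{1-x}$ does turn $q^{\prime }$ into a rational function of $t$, but $p^{\prime }=\frac{3\pi }{256}F\left(\frac{3}{2},\frac{3}{2};4;1-t^{2}\right)$ is \emph{not} a power series in $t$ near $t=0$: here $c-a-b=1$, so the expansion at $x=1$ carries logarithmic terms of the type $t^{2}\ln t$ (cf. Lemma \ref{L-af-1,2,3}), and no coefficientwise comparison in the variable $t$ is available. You would instead have to expand both $p^{\prime }$ and $q^{\prime }$ in powers of $x$ about $0$ and verify the hypotheses of Lemma \ref{L-A/B-pm} (positivity of the denominator coefficients and piecewise monotonicity of the coefficient ratio), none of which you check. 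The paper avoids $h^{\prime }$ entirely: it multiplies $D^{\prime \prime }$ by $\left(x+15\right)^{2}\left(1-x\right)^{2}$ so that the algebraic part becomes polynomial plus $\left(3x+13\right)\sqrt{1-x}$, expands the result as a single power series $\sum c_{n}x^{n}$ with explicit Wallis-ratio coefficients, shows $c_{0},c_{1}>0$ while for $n\geq 2$ the quantity $d_{n}$ is strictly decreasing so that $\left\{c_{n}\right\}$ changes sign exactly once, and then concludes from $h\left(1^{-}\right)=0$ via Lemma \ref{L-sgnS} that $h>0$ on $\left(0,1\right)$. The most direct repair of your argument is exactly this: multiply your $h$ by $\left(x+15\right)^{2}$, expand in powers of $x$, and apply Lemma \ref{L-sgnS}, which dispenses with the monotonicity of $q^{\prime }/p^{\prime }$ altogether.
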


\section{Lemmas}

To prove our main results, we need the following lemmas. In order to state
the first lemma, we need to introduce a useful auxiliary function $H_{f,g}$.
For $-\infty \leq a<b\leq \infty $, let $f$ and $g$ be differentiable on $%
(a,b)$ and $g^{\prime }\neq 0$ on $(a,b)$. Then the function $H_{f,g}$ is
defined by 
\begin{equation}
H_{f,g}:=\frac{f^{\prime }}{g^{\prime }}g-f.  \label{H_f,g}
\end{equation}%
The function $H_{f,g}$ has some well properties \cite[Properties 1, 2]%
{Yang-arxiv-1409.6408}. In particular, we have%
\begin{eqnarray}
\left( \frac{f}{g}\right) ^{\prime } &=&\frac{g^{\prime }}{g^{2}}\left( 
\frac{f^{\prime }}{g^{\prime }}g-f\right) =\frac{g^{\prime }}{g^{2}}H_{f,g},
\label{d-f/g} \\
H_{f,g}^{\prime } &=&\left( \frac{f^{\prime }}{g^{\prime }}\right) ^{\prime
}g.  \label{dHf,g}
\end{eqnarray}

\begin{lemma}[{\protect\cite[Theorem 1]{Yang-JMAA-428-2015}}]
\label{L-A/B-pm}Let $A\left( t\right) =\sum_{k=0}^{\infty }a_{k}t^{k}$ and $%
B\left( t\right) =\sum_{k=0}^{\infty }b_{k}t^{k}$ be two real power series
converging on $\left( -r,r\right) $ and $b_{k}>0$ for all $k$. Suppose that
for certain $m\in 
\mathbb{N}
$, the non-constant sequence $\left\{ a_{k}/b_{k}\right\} $ is increasing
(resp. decreasing) for $0\leq k\leq m$ and decreasing (resp. increasing) for 
$k\geq m$. Then the function $A/B$ is strictly increasing (resp. decreasing)
on $\left( 0,r\right) $ if and only if $H_{A,B}\left( r^{-}\right) \geq $
(resp. $\leq $)$0$. Moreover, if $H_{A,B}\left( r^{-}\right) <$ (resp. $>$)$%
0 $, then there exists $t_{0}\in \left( 0,r\right) $ such that the function $%
A/B$ is strictly increasing (resp. decreasing) on $\left( 0,t_{0}\right) $
and strictly decreasing (resp. increasing) on $\left( t_{0},r\right) $.
\end{lemma}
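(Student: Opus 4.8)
The plan is to reduce the whole statement to the sign behaviour of the auxiliary function $H_{A,B}$ and to prove that, under the unimodality hypothesis on $\left\{a_{k}/b_{k}\right\}$, this function changes sign at most once on $\left(0,r\right)$, from $+$ to $-$. First I would record that, since $b_{k}>0$ for every $k$, on $\left(0,r\right)$ one has $B>0$ and $B^{\prime }>0$; hence by (\ref{d-f/g}) the sign of $\left(A/B\right)^{\prime }$ equals that of $H_{A,B}$, and by (\ref{dHf,g}) the sign of $H_{A,B}^{\prime }$ equals that of $\left(A^{\prime }/B^{\prime }\right)^{\prime }$. I would also reduce to the case in which $\left\{a_{k}/b_{k}\right\}$ increases for $0\leq k\leq m$ and decreases for $k\geq m$, since the other case results from replacing $a_{k}$ by $-a_{k}$, which turns $A/B$ into $-A/B$ and reverses all the inequalities.

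The central step is to prove, by induction on the peak index $m$, the structural claim that $\left(A/B\right)^{\prime }$ changes sign at most once on $\left(0,r\right)$, and that any such change is from $+$ to $-$. For $m=0$ the quotient $\left\{a_{k}/b_{k}\right\}$ is decreasing, and expanding $A^{\prime }B-AB^{\prime }=\sum_{n}d_{n}t^{n-1}$ with $d_{n}=\sum_{i<j,\,i+j=n}\left(i-j\right)b_{i}b_{j}\left(a_{i}/b_{i}-a_{j}/b_{j}\right)$ shows every $d_{n}\leq 0$, so $A/B$ is decreasing with no sign change. For the inductive step I would observe that $A^{\prime }/B^{\prime }$ is a ratio of the same type whose coefficient quotient $\left\{a_{k+1}/b_{k+1}\right\}$ is unimodal with peak index $m-1$; applying the claim to $A^{\prime }/B^{\prime }$ gives that $\left(A^{\prime }/B^{\prime }\right)^{\prime }$, hence by (\ref{dHf,g}) also $H_{A,B}^{\prime }$, changes sign at most once from $+$ to $-$, so $H_{A,B}$ is monotone or increasing-then-decreasing. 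Since $H_{A,B}\left(0^{+}\right)=b_{0}\left(a_{1}/b_{1}-a_{0}/b_{0}\right)\geq 0$ for $m\geq 1$, such a function crosses zero at most once and only downwards, which is exactly the claim at level $m$.

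With the structural claim in hand, the characterisation follows by examining the endpoint value $H_{A,B}\left(r^{-}\right)$, which exists in $\left[-\infty ,+\infty \right]$ because $H_{A,B}$ is eventually monotone. If $H_{A,B}\left(r^{-}\right)\geq 0$, no downward sign change can occur, so $H_{A,B}\geq 0$ on $\left(0,r\right)$; being real-analytic and, by non-constancy of $\left\{a_{k}/b_{k}\right\}$, not identically zero, $H_{A,B}$ is then positive off a discrete set and $A/B$ is strictly increasing. The converse is immediate, since a strictly increasing $A/B$ forces $H_{A,B}\geq 0$ and hence $H_{A,B}\left(r^{-}\right)\geq 0$. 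If instead $H_{A,B}\left(r^{-}\right)<0$, the single downward crossing occurs at some $t_{0}\in \left(0,r\right)$, giving $A/B$ strictly increasing on $\left(0,t_{0}\right)$ and strictly decreasing on $\left(t_{0},r\right)$.

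I expect the induction itself to be the delicate point: I must check carefully that passing from $A/B$ to $A^{\prime }/B^{\prime }$ lowers the peak index by exactly one and preserves positivity of the denominator coefficients, so that the single-sign-change property genuinely propagates, and that the degenerate situations, such as a constant shifted quotient or equality $a_{0}/b_{0}=a_{1}/b_{1}$, are absorbed into the same dichotomy. Once the structural claim and the sign of $H_{A,B}\left(0^{+}\right)$ are secured, the equivalence and the location of the turning point are routine endpoint bookkeeping.
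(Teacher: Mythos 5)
This lemma carries no proof in the paper itself: it is imported verbatim from \cite[Theorem 1]{Yang-JMAA-428-2015}, so the comparison is necessarily with that cited source, whose method your reconstruction essentially reproduces. Your skeleton is sound: since $b_{k}>0$ gives $B,B^{\prime }>0$ on $\left( 0,r\right) $, by (\ref{d-f/g}) the sign of $\left( A/B\right) ^{\prime }$ is that of $H_{A,B}$ and by (\ref{dHf,g}) the sign of $H_{A,B}^{\prime }$ is that of $\left( A^{\prime }/B^{\prime }\right) ^{\prime }$; differentiation replaces the coefficient quotient by $\left( k+1\right) a_{k+1}/\left( \left( k+1\right) b_{k+1}\right) =a_{k+1}/b_{k+1}$, which is unimodal with peak index $m-1$ and positive denominators, so the induction genuinely propagates; the base case is the classical Biernacki--Krzy\.{z} computation with your $d_{n}=\sum_{i<j,\,i+j=n}\left( i-j\right) b_{i}b_{j}\left( a_{i}/b_{i}-a_{j}/b_{j}\right) \leq 0$, which is correct; and $H_{A,B}\left( 0^{+}\right) =b_{0}\left( a_{1}/b_{1}-a_{0}/b_{0}\right) \geq 0$ closes the loop. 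The endpoint bookkeeping (existence of $H_{A,B}\left( r^{-}\right) $ in the extended reals by eventual monotonicity, a downward crossing forcing $H_{A,B}\left( r^{-}\right) <0$, real-analyticity upgrading $H_{A,B}\geq 0$ to strict monotonicity of $A/B$) is also fine.

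The one soft spot is your claim that the degenerate cases are ``absorbed into the same dichotomy'': in the head-constant case $a_{0}/b_{0}=\cdots =a_{m}/b_{m}$ followed by strict decrease, one has $H_{A,B}\left( 0^{+}\right) =0$ and the unique downward crossing sits at $t_{0}=0$, so the ``moreover'' clause is genuinely false, not merely delicate. Concretely, take $A\left( t\right) =1+t$ and $B\left( t\right) =1/\left( 1-t\right) $ on $\left( 0,1\right) $: the quotient sequence is $1,1,0,0,\dots $ (non-constant, increasing for $0\leq k\leq 1$, decreasing for $k\geq 1$), yet $A/B=1-t^{2}$ is strictly decreasing throughout, while $H_{A,B}\left( t\right) =-2t$ gives $H_{A,B}\left( 1^{-}\right) =-2<0$ with no interior turning point. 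This is a defect of the statement under the non-strict reading of ``increasing'' (shared by the original source), not of your method: once the increasing part contains at least one strict increase, every strict increase of $\left\{ a_{k}/b_{k}\right\} $ precedes every strict decrease, the first nonzero coefficient of $A^{\prime }B-AB^{\prime }$ is then positive, hence $A/B$ strictly increases near $0$ and your crossing $t_{0}$ is interior. With that reading made explicit, your proof is complete and matches the cited argument.
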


\begin{remark}
Lemma \ref{L-A/B-pm} is a powerful tool to deal with the monotonicity of the
ratio of power series in the case when the sequence $\{a_{k}/b_{k}\}_{k\geq
0}$ is piecewise monotonic, and is now applied preliminarily, see \cite%
{Yang-JIA-40-2016}, \cite{Wang-AMC-276-2016}, \cite{Yang-MIA-accepted}.
\end{remark}

The following lemma offers a simple criterion to determine the sign of a
class of special series. A polynomial version appeared in \cite%
{Yang-AAA-2014-702718}, and another series version converging on $\left(
0,\infty \right) $ can see \cite{Yang-JIA-299-2015}.

\begin{lemma}
\label{L-sgnS}Let $\{a_{k}\}_{k=0}^{\infty }$ be a nonnegative real sequence
with $a_{m}>0$ and $\sum_{k=m+1}^{\infty }a_{k}>0$ and let%
\begin{equation*}
S\left( t\right) =-\sum_{k=0}^{m}a_{k}t^{k}+\sum_{k=m+1}^{\infty }a_{k}t^{k}
\end{equation*}%
be a convergent power series on the interval $\left( 0,r\right) $ ($r>0$).
(i) If $S\left( r^{-}\right) \leq 0$, then $S\left( t\right) <0$ for all $%
t\in \left( 0,r\right) $. (ii) If $S\left( r^{-}\right) >0$, then there is a
unique $t_{0}\in \left( 0,r\right) $ such that $S\left( t\right) <0$ for $%
t\in \left( 0,t_{0}\right) $ and $S\left( t\right) >0$ for $t\in \left(
t_{0},r\right) $.
\end{lemma}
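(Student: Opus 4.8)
The plan is to factor out $t^{m}$ and study the single auxiliary function $g(t)=S(t)/t^{m}$ on $(0,r)$, thereby converting the sign structure of the coefficients into a monotonicity statement to which the intermediate value theorem can be applied. This is the key idea of the whole argument; once it is in place, the two cases become routine.

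First I would write, for $t\in(0,r)$,
\begin{equation*}
g(t)=\frac{S(t)}{t^{m}}=-\sum_{k=0}^{m}a_{k}t^{k-m}+\sum_{k=m+1}^{\infty}a_{k}t^{k-m},
\end{equation*}
which is well defined and differentiable there since $t^{m}>0$ (the negative powers $t^{k-m}$ with $k<m$ cause no difficulty away from the origin). Differentiating term by term, which is legitimate on compact subintervals of $(0,r)$, yields
\begin{equation*}
g^{\prime}(t)=\sum_{k=0}^{m}a_{k}(m-k)t^{k-m-1}+\sum_{k=m+1}^{\infty}a_{k}(k-m)t^{k-m-1}.
\end{equation*}
Every summand is nonnegative: in the first sum $m-k\geq 0$, and in the second $k-m\geq 1>0$. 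The hypothesis $\sum_{k=m+1}^{\infty}a_{k}>0$ guarantees at least one strictly positive summand in the second sum, so $g^{\prime}(t)>0$ for all $t\in(0,r)$, i.e.\ $g$ is strictly increasing on $(0,r)$.

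Next I would determine the behavior of $g$ at the left endpoint. Since the tail $\sum_{k\geq m+1}a_{k}t^{k-m}\to 0$ as $t\to 0^{+}$, the limit of $g$ is controlled entirely by the first block: if $a_{0}=\cdots=a_{m-1}=0$ then $g(0^{+})=-a_{m}$, and otherwise $g(0^{+})=-\infty$. Because $a_{m}>0$, in both cases $\lim_{t\to 0^{+}}g(t)<0$. Combined with strict monotonicity, this shows $g$ starts negative and increases to its limiting value $g(r^{-})=S(r^{-})/r^{m}$ at the right endpoint.

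Finally I would split into the two stated cases. For part (i), if $S(r^{-})\leq 0$ then $g(r^{-})\leq 0$, so strict monotonicity gives $g(t)<g(r^{-})\leq 0$, whence $S(t)=t^{m}g(t)<0$ for all $t\in(0,r)$. For part (ii), if $S(r^{-})>0$ then $g$ runs continuously and strictly increasingly from a negative value to the positive value $g(r^{-})$, so the intermediate value theorem produces a unique zero $t_{0}\in(0,r)$ of $g$; strict monotonicity then forces $g(t)<0$ on $(0,t_{0})$ and $g(t)>0$ on $(t_{0},r)$, and multiplying by $t^{m}>0$ gives exactly the asserted sign pattern for $S$. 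The only points requiring care are confirming the sign (and possible infiniteness) of $g(0^{+})$ and extracting strictness from the hypothesis $\sum_{k>m}a_{k}>0$; neither is a genuine obstacle, so I expect the substance of the proof to lie entirely in the clean monotonicity of $S(t)/t^{m}$.
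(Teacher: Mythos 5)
Your proof is correct, but it follows a genuinely different route from the paper's. The paper proves the lemma by induction on $m$: the base case $m=0$ is immediate since $S'(t)>0$ and $S(0^{+})=-a_{0}<0$, and the inductive step observes that $S'$ is a series of the same shape with $m$ reduced by one, then splits into cases according to the sign of $S'(r^{-})$ (if $S'(r^{-})\leq 0$ then $S$ is decreasing and stays below $-a_{0}\leq 0$; if $S'(r^{-})>0$ then $S$ decreases on $(0,t_{1})$ and increases on $(t_{1},r)$, and a final case analysis on $S(r^{-})$ yields the conclusion). You instead collapse the whole induction into a single monotonicity statement by dividing by $t^{m}$: the function $g(t)=S(t)/t^{m}$ has derivative $\sum_{k=0}^{m}a_{k}(m-k)t^{k-m-1}+\sum_{k=m+1}^{\infty}a_{k}(k-m)t^{k-m-1}$, all of whose terms are nonnegative with at least one strictly positive (by $\sum_{k>m}a_{k}>0$), so $g$ is strictly increasing from the negative limit $g(0^{+})\in\{-a_{m},-\infty\}$ (here $a_{m}>0$ is used) up to $g(r^{-})=S(r^{-})/r^{m}$, and both cases follow at once from the intermediate value theorem. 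Your argument buys brevity and transparency: uniqueness of $t_{0}$ is automatic from strict monotonicity of $g$ rather than extracted from a two-stage decrease--increase analysis, there is no case bookkeeping, and the method extends verbatim to series with non-integer or real exponents, since only the signs of the exponent differences $k-m$ matter. What the paper's induction buys is that it works directly with $S$ and its derivatives (no division, no negative powers, no separate justification of $g(0^{+})$), and it runs in parallel with the polynomial version the authors cite; but the small technical points your approach requires (term-by-term differentiation away from $0$ and the evaluation of $g(0^{+})$) are exactly the ones you flagged and dispatched correctly, so there is no gap.
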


\begin{proof}
We prove the desired assertions by mathematical induction for the negative
integral $m$.

For $m=0$, we have $S^{\prime }\left( t\right) =\sum_{k=m+1}^{\infty
}ka_{k}t^{k-1}>0$, which together with $S\left( 0^{+}\right) =-a_{0}<0$ and $%
S\left( r^{-}\right) \leq \left( >\right) 0$ yields the desired assertions.

Suppose that the desired assertions are true for $m=n$. We prove they are
also true for $m=n+1$ by distinguishing two cases.

Case 1: $S^{\prime }\left( r^{-}\right) \leq 0$. By inductive hypothesis we
have $S^{\prime }\left( t\right) <0$ for all $t\in \left( 0,r\right) $. Then 
$S\left( t\right) <S\left( 0^{+}\right) =-a_{0}\leq 0$ for all $t\in \left(
0,r\right) $.

Case 2: $S^{\prime }\left( r^{-}\right) >0$. By inductive hypothesis it is
deduced that there is a $t_{1}\in \left( 0,r\right) $ such that $S^{\prime
}\left( t\right) <0$ for $t\in \left( 0,t_{1}\right) $ and $S^{\prime
}\left( t\right) >0$ for $t\in \left( t_{1},r\right) $. If $S\left(
r^{-}\right) \leq 0$, then we have $S\left( t\right) <S\left( r^{-}\right)
\leq 0$ for $t\in \left( t_{1},r\right) $ and $S\left( t\right) <S\left(
0^{+}\right) =-a_{0}\leq 0$ for $t\in \left( 0,t_{1}\right) $, which implies
that $S\left( t\right) \leq 0$ for all $t\in \left( 0,r\right) $. If $%
S\left( r^{-}\right) >0$, then since $S\left( t\right) <S\left( 0^{+}\right)
=-a_{0}\leq 0$ for $t\in \left( 0,t_{1}\right) $, there is a $t_{0}\in
\left( t_{1},r\right) $ such that $S\left( t\right) <0$ for $t\in \left(
0,t_{0}\right) $ and $S\left( t\right) >0$ for $t\in \left( t_{0},r\right) $%
, which completes the proof.
\end{proof}

\begin{lemma}
\label{L-af-1,2,3}We have the following asymptotic formulas:%
\begin{eqnarray*}
F\left( \frac{1}{2},\frac{1}{2};1;x\right) &=&\frac{\ln \left( 16/t\right) }{%
\pi }+\frac{t}{4\pi }\left( \ln \left( 16/t\right) -2\right) +O\left(
t^{2}\ln t\right) , \\
F\left( \frac{1}{2},\frac{1}{2};2;x\right) &=&\frac{4}{\pi }-\frac{t}{\pi }%
\left( \ln \left( 16/t\right) -3\right) +O\left( t^{2}\ln t\right) ,
\end{eqnarray*}%
\ as $t\rightarrow 0^{+}$, where $t=1-x$.
\end{lemma}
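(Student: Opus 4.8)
The plan is to read off both expansions from the classical logarithmic-case expansions of the Gauss hypergeometric function at the regular singular point $x=1$, since both parameter triples are degenerate: for $F(1/2,1/2;1;x)$ one has $c=a+b$, while for $F(1/2,1/2;2;x)$ one has $c-a-b=1\in\mathbb{N}$. In both cases the difference $c-a-b$ is a nonnegative integer, so a logarithm appears and the full expansion in powers of $t=1-x$ (with coefficients linear in $\ln t$) is available in closed form. First I would specialize the $c=a+b$ expansion to $a=b=1/2$: its $n$-th coefficient is $[(1/2)_n]^2/(n!)^2$ times the bracket $2\psi(n+1)-\psi(a+n)-\psi(b+n)-\ln(1-x)$, with prefactor $1/B(1/2,1/2)=1/\pi$ — exactly the data already recorded, at the level $n=0$, in (\ref{F-near1}) and (\ref{R(a,b)}).

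The computation then reduces to evaluating digammas at the arguments that occur, namely $\psi(1)=-\gamma$, $\psi(2)=1-\gamma$, $\psi(1/2)=-\gamma-2\ln2$ and $\psi(3/2)=2-\gamma-2\ln2$. For the first function the $n=0$ bracket collapses to $4\ln2-\ln t=\ln(16/t)$, reproducing the leading term $\pi^{-1}\ln(16/t)$, and the $n=1$ bracket collapses to $\ln(16/t)-2$, which with the coefficient $[(1/2)_1]^2/(1!)^2=1/4$ gives the stated term $(4\pi)^{-1}t(\ln(16/t)-2)$. For the second function I would use the integer-difference expansion with $m=c-a-b=1$: the finite part contributes the single constant $\Gamma(m)\Gamma(c)/[\Gamma(a+m)\Gamma(b+m)]=4/\pi$, and the logarithmic series, carrying the factor $(z-1)^m=-t$, contributes at $n=0$ the bracket $-\psi(1)-\psi(2)+2\psi(3/2)+\ln t=3-\ln(16/t)$, yielding $-(t/\pi)(\ln(16/t)-3)$. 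In each case every term with $n\ge 2$ is $O(t^n\ln t)$, so truncation leaves the claimed remainder $O(t^2\ln t)$.

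The main obstacle I expect is the bookkeeping of the logarithmic terms and their signs rather than any analytic difficulty: one must correctly fuse the constant $R$-type contributions with $-\ln t$ to produce $\ln(16/t)$, and, for the second function, track the sign flip coming from $(z-1)^m=(-t)^m$ at $m=1$, which is precisely what turns a naive $+(t/\pi)(\ln(16/t)-3)$ into the correct $-(t/\pi)(\ln(16/t)-3)$. A quick monotonicity sanity check confirms the sign: the power series of $F(1/2,1/2;2;x)$ has positive coefficients, so it increases to $4/\pi$ as $t\to 0^+$, forcing the correction to be negative for small $t>0$.

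As a self-contained alternative that avoids quoting the integer-difference formula, I would instead differentiate. By (\ref{df}) together with the Euler transformation in the third line of (\ref{F-near1}) one obtains the exact coupling $\tfrac{d}{dt}F(1/2,1/2;1;x)=-\tfrac{1}{4t}F(1/2,1/2;2;x)$ (with $t=1-x$), so that the second expansion is simply $-4t$ times the derivative of the first. Feeding in the first expansion then recovers the second formula with no separate reference, and conversely furnishes an independent check on the $t$-coefficients of both.
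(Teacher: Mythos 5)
Your proposal follows essentially the same route as the paper's proof: you specialize the classical logarithmic expansions of $F(a,b;a+b+m;x)$ near $x=1$ (which the paper quotes from Abramowitz--Stegun, p.~559) to $a=b=1/2$ with the $c=a+b$ case and the $m=1$ case, and your digamma evaluations and sign bookkeeping (including the $(-t)^m$ factor) are correct, merely making explicit the one-line computation the paper compresses into ``letting $a=b=1/2$ and taking $m=1,2$.'' The alternative derivation of the second formula via the exact coupling $\frac{d}{dt}F\left(\frac{1}{2},\frac{1}{2};1;x\right)=-\frac{1}{4t}F\left(\frac{1}{2},\frac{1}{2};2;x\right)$ is a sound extra cross-check (legitimate here because the quoted expansion is a convergent series in $t$, so term-by-term differentiation of the remainder is justified), but it does not change the character of the argument.
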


\begin{proof}
It was listed in \cite[p. 559]{Abramowitz-HMFFGMT-NY-1965} that%
\begin{equation*}
F\left( a,b;a+b;x\right) =\frac{1}{B\left( a,b\right) }\sum_{n=0}^{\infty }%
\frac{\left( a\right) _{n}\left( b\right) _{n}}{n!^{2}}\left[ 2\psi \left(
n+1\right) -\psi \left( n+a\right) -\psi \left( n+b\right) -\ln t\right]
t^{n},
\end{equation*}%
and for $m=1,2,3,...$,%
\begin{equation*}
\begin{array}{l}
F\left( a,b;a+b+m;x\right) =\dfrac{\Gamma \left( m\right) \Gamma \left(
a+b+m\right) }{\Gamma \left( a+m\right) \Gamma \left( b+m\right) }%
\sum_{n=0}^{m-1}\dfrac{\left( a\right) _{n}\left( b\right) _{n}}{n!\left(
1-m\right) _{n}}t^{n}\bigskip \\ 
-\dfrac{\Gamma \left( a+b+m\right) }{\Gamma \left( a\right) \Gamma \left(
b\right) }\left( -t\right) ^{m}\times \sum_{n=0}^{\infty }\left[ \dfrac{%
\left( a+m\right) _{n}\left( b+m\right) _{n}}{n!\left( n+m\right) !}%
t^{n}\times \right. \bigskip \\ 
\left. \left( \ln t-\psi \left( n+1\right) -\psi \left( n+m+1\right) +\psi
\left( a+n+m\right) +\psi \left( b+n+m\right) \right) \right] ,%
\end{array}%
\end{equation*}%
where $t=1-x\in \left( 0,1\right) $. Letting $a=b=1/2$ and taking $m=1,2$
yield the desired asymptotic formulas.
\end{proof}

\section{Proof of Theorem \protect\ref{MT-1}}

For convenience, we use $W_{n}$ to denote the Wallis ratio, that is,%
\begin{equation}
W_{n}=\frac{\Gamma \left( n+1/2\right) }{\Gamma \left( 1/2\right) \Gamma
\left( n+1/2\right) }.  \label{Wn}
\end{equation}%
It is easy to see that $W_{n}$ satisfies the recurrence relation%
\begin{equation}
W_{n+1}=\frac{n+1/2}{n+1}W_{n}.  \label{Wn-rr}
\end{equation}%
To prove Theorem \ref{MT-1}, we first give the following lemma.

\begin{lemma}
\label{L-b.n}Let the sequence $\{\beta _{n}\}$ be defined by%
\begin{equation}
\beta _{n}=\sum_{k=0}^{n-1}\left( \frac{\left( 11k-17\right) W_{k}^{2}}{%
\left( k+1\right) \left( k+2\right) \left( k+3\right) }\frac{1}{n-k}\right) ,
\label{b.n}
\end{equation}%
Then $\beta _{n}$ satisfies the recurrence formula%
\begin{equation}
\beta _{n+1}-\lambda _{n}\beta _{n}=-\frac{1}{9}\frac{\left( 2n+1\right)
\left( 880n^{4}+2404n^{3}-7319n^{2}-20\,301n-10\,404\right) }{\left(
11n-17\right) \left( n+1\right) ^{2}\left( n+2\right) \left( n+3\right)
\left( n+4\right) }W_{n}^{2},  \label{b.n+1-b.n}
\end{equation}%
where%
\begin{equation}
\lambda _{n}=\frac{1}{4}\dfrac{\left( 11n-6\right) \left( 2n+1\right) ^{2}}{%
\left( n+4\right) \left( 11n-17\right) \left( n+1\right) }.  \label{l.n}
\end{equation}
\end{lemma}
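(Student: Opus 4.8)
The plan is to verify the identity (\ref{b.n+1-b.n}) directly, by showing that the combination $\beta_{n+1}-\lambda_n\beta_n$ collapses to a single closed-form term through a telescoping argument. Write $c_k=(11k-17)W_k^2/[(k+1)(k+2)(k+3)]$ for the $k$-th summand coefficient, so that $\beta_n=\sum_{k=0}^{n-1}c_k/(n-k)$. The first step is the key algebraic observation that the multiplier $\lambda_n$ in (\ref{l.n}) is nothing but the ratio of consecutive coefficients: using the Wallis recurrence (\ref{Wn-rr}) in the form $W_{n+1}^2=(2n+1)^2W_n^2/[4(n+1)^2]$, one checks that
\begin{equation*}
\frac{c_{n+1}}{c_n}=\frac{(11n-6)(n+1)}{(n+4)(11n-17)}\frac{W_{n+1}^2}{W_n^2}=\frac{(11n-6)(2n+1)^2}{4(n+4)(11n-17)(n+1)}=\lambda_n.
\end{equation*}
Equivalently, $c_{k+1}=\lambda_k c_k$ for every $k$, so $\{c_k\}$ is a hypergeometric sequence.

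Next I would separate the top term and recombine. Since the $k=n$ term of $\beta_{n+1}$ equals $c_n/(n+1-n)=c_n$,
\begin{equation*}
\beta_{n+1}-\lambda_n\beta_n=c_n+\sum_{k=0}^{n-1}c_k\left(\frac{1}{n+1-k}-\frac{\lambda_n}{n-k}\right).
\end{equation*}
The heart of the proof is to show that the remaining sum telescopes. Using $c_{k+1}=\lambda_k c_k$, I would look for a rational certificate $w(n,k)$ satisfying the Gosper-type relation
\begin{equation*}
w(n,k)-\lambda_k\,w(n,k+1)=\frac{1}{n+1-k}-\frac{\lambda_n}{n-k},
\end{equation*}
so that each summand becomes the exact difference $w(n,k)c_k-w(n,k+1)c_{k+1}$. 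The sum then collapses to the boundary values $w(n,0)c_0-w(n,n)c_n$, and the whole combination reduces to $\beta_{n+1}-\lambda_n\beta_n=c_n+w(n,0)c_0-w(n,n)c_n$.

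To finish, one checks that the certificate satisfies $w(n,0)=0$, so the $c_0$-contribution vanishes and $\beta_{n+1}-\lambda_n\beta_n=(1-w(n,n))c_n$; substituting $c_n=(11n-17)W_n^2/[(n+1)(n+2)(n+3)]$ and simplifying then produces exactly (\ref{b.n+1-b.n}), the quartic $880n^4+2404n^3-7319n^2-20301n-10404$ emerging as the numerator of $1-w(n,n)$ after clearing denominators.

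The main obstacle is producing and validating the certificate $w(n,k)$: solving the Gosper equation amounts to running creative telescoping on the hypergeometric summand $c_k/(n-k)$, and both this step and the concluding polynomial simplification are routine but heavy, so I would carry them out (or at least check them) with a computer algebra system. A secondary technical point requiring care is that the factor $1/(n+1-k)-\lambda_n/(n-k)$ and the certificate have apparent poles at $k=n$ and $k=n+1$; since the summation runs only over $0\le k\le n-1$ and only the finite boundary values $w(n,0)$ and $w(n,n)$ are used, these cause no actual difficulty, but the telescoping must be set up over the correct index range to avoid them.
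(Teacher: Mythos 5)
Your proposal is sound and, once the deferred computations are executed, would prove the lemma, but it takes a genuinely different route from the paper. Both arguments rest on your key identity $c_{n+1}/c_n=\lambda_n$ (the paper uses it in the equivalent form $W_{k+1}^2=\left(\tfrac{k+1/2}{k+1}\right)^2W_k^2$), but the decompositions differ: you peel off the top term $c_n$ and telescope the summand $c_k\left(\tfrac{1}{n+1-k}-\tfrac{\lambda_n}{n-k}\right)$ against a bivariate rational certificate found by creative telescoping, whereas the paper shifts the index, peeling off the bottom term $\beta_{0,n+1}=c_0/(n+1)=-\tfrac{17}{6(n+1)}$, so that both fractions share the denominator $n-k$ and the summand becomes $(\lambda_k-\lambda_n)\,c_k/(n-k)$. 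Since $\lambda_k-\lambda_n$ vanishes at $k=n$, the pole $1/(n-k)$ cancels, leaving a purely hypergeometric sum $\sum_{k=0}^{n-1}\phi(k,n)W_k^2/[(k+1)^2(k+2)(k+3)(k+4)]$ with $\phi$ quadratic in $k+\tfrac12$, which the paper evaluates by splitting into the three partial sums $\phi_2,\phi_1,\phi_0$, whose closed forms (a rational multiple of $W_n^2$ plus a constant) are read off by comparing coefficients in Euler-transformation identities for $F\left(\pm\tfrac12,\pm\tfrac12;4;x\right)$. In effect the paper constructs your certificate by hand: its three closed forms assemble into a rational $w(n,k)$ solving your Gosper equation, and this also settles the one point your plan leaves genuinely open — Gosper's algorithm is only guaranteed to decide, not to succeed, so the existence of a rational solution $w(n,k)$ required justification rather than hope. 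Your claim $w(n,0)=0$ is likewise true but not free: since
\begin{equation*}
\sum_{k=0}^{n-1}\left(\frac{c_k}{n+1-k}-\frac{c_{k+1}}{n-k}\right)=\frac{c_0}{n+1}-c_n,
\end{equation*}
your decomposition and the paper's differ by exactly the nonvanishing boundary term $c_0/(n+1)$, and $w(n,0)=0$ encodes the fact that the certificate's value at $k=0$ cancels it precisely (the hand-built antidifference equals $\tfrac{17}{6(n+1)}=-c_0/(n+1)$ there); had this cancellation failed, your final formula would have carried an extra $n$-dependent $c_0$ term, so it must be verified, as you indicate. What your route buys is mechanizability — a single CAS run certifies the whole lemma; what the paper's buys is a self-contained, human-checkable derivation with explicit closed forms and no appeal to software.
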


\begin{proof}
Denote by%
\begin{equation*}
\beta _{k,n-k}=\frac{\left( 11k-17\right) W_{k}^{2}}{\left( k+1\right)
\left( k+2\right) \left( k+3\right) }\frac{1}{n-k}.
\end{equation*}%
Then we have%
\begin{eqnarray*}
\beta _{n+1}-\lambda _{n}\beta _{n} &=&\sum_{k=0}^{n}\beta
_{k,n+1-k}-\lambda _{n}\sum_{k=0}^{n-1}\beta _{k,n-k} \\
&=&\beta _{0,n+1}+\sum_{k=0}^{n-1}\left[ \beta _{k+1,n-k}-\lambda _{n}\beta
_{k,n-k}\right]
\end{eqnarray*}%
\begin{eqnarray*}
&=&-\tfrac{17}{6\left( n+1\right) }+\sum_{k=0}^{n-1}\left( \tfrac{11k-6}{%
\left( k+4\right) \left( k+2\right) \left( k+3\right) }\left( \tfrac{k+1/2}{%
k+1}\right) ^{2}-\tfrac{\left( 11n-6\right) \left( 2n+1\right) ^{2}}{4\left(
n+4\right) \left( 11n-17\right) \left( n+1\right) }\tfrac{11k-17}{\left(
k+1\right) \left( k+2\right) \left( k+3\right) }\right) \tfrac{W_{k}^{2}}{n-k%
} \\
&=&-\tfrac{17}{6\left( n+1\right) }-\tfrac{1}{4\left( n+1\right) \left(
n+4\right) \left( 11n-17\right) }\sum_{k=0}^{n-1}\tfrac{\phi \left(
k,n\right) }{\left( k+1\right) ^{2}\left( k+2\right) \left( k+3\right)
\left( k+4\right) }W_{k}^{2},
\end{eqnarray*}%
where%
\begin{equation*}
\begin{array}{l}
\phi \left( k,n\right) =11\left( 132n^{2}-151n-266\right) k^{2}-\left(
1661n^{2}+3252n+1132\right) k-2\left( 1463n^{2}+566n-319\right) \\ 
=11\left( 132n^{2}-151n-266\right) \left( k+\frac{1}{2}\right) ^{2}-\left(
283n+299\right) \left( 11n-6\right) \left( k+\frac{1}{2}\right) -\frac{315}{4%
}\left( 2n+1\right) \left( 11n-6\right) .%
\end{array}%
\end{equation*}%
Then%
\begin{equation}
\begin{array}{l}
\beta _{n+1}-\lambda _{n}\beta _{n}=-\dfrac{17}{6\left( n+1\right) }-\dfrac{%
11}{4}\dfrac{\left( 132n^{2}-151n-266\right) }{\left( n+1\right) \left(
n+4\right) \left( 11n-17\right) }\phi _{2}\left( n\right) \bigskip \\ 
+\dfrac{1}{4}\dfrac{\left( 283n+299\right) \left( 11n-6\right) }{\left(
n+1\right) \left( n+4\right) \left( 11n-17\right) }\phi _{1}\left( n\right) +%
\dfrac{315}{16}\dfrac{\left( 2n+1\right) \left( 11n-6\right) }{\left(
n+1\right) \left( n+4\right) \left( 11n-17\right) }\phi _{0}\left( n\right) ,%
\end{array}
\label{bn-rf.}
\end{equation}%
where%
\begin{eqnarray*}
\phi _{2}\left( n\right) &=&\sum_{k=0}^{n-1}\dfrac{\left( k+1/2\right)
^{2}W_{k}^{2}}{\left( k+1\right) ^{2}\left( k+2\right) \left( k+3\right)
\left( k+4\right) }\text{, } \\
\phi _{1}\left( n\right) &=&\sum_{k=0}^{n-1}\dfrac{\left( k+1/2\right)
W_{k}^{2}}{\left( k+1\right) ^{2}\left( k+2\right) \left( k+3\right) \left(
k+4\right) }\text{, } \\
\phi _{0}\left( n\right) &=&\sum_{k=0}^{n-1}\dfrac{W_{k}^{2}}{\left(
k+1\right) ^{2}\left( k+2\right) \left( k+3\right) \left( k+4\right) }.
\end{eqnarray*}%
Note that $\phi _{i}\left( n\right) $ ($i=1,2,3$) can be written as%
\begin{eqnarray*}
\phi _{2}\left( n\right) &=&\tfrac{1}{3!}\left( \sum_{k=1}^{n}\tfrac{\left( 
\frac{1}{2}\right) _{k}^{2}}{\left( 4\right) _{k}k!}-1\right) ,\phi
_{1}\left( n\right) =-\tfrac{2}{3!}\left( \sum_{k=1}^{n}\tfrac{\left( -\frac{%
1}{2}\right) _{k}\left( \frac{1}{2}\right) _{k}}{\left( 4\right) _{k}k!}%
-1\right) , \\
\phi _{0}\left( n\right) &=&\tfrac{4}{3!}\left( \sum_{k=1}^{n}\tfrac{\left( -%
\frac{1}{2}\right) _{k}^{2}}{\left( 4\right) _{k}k!}-1\right) ,
\end{eqnarray*}%
by comparing the coefficients of power series of the following formulas%
\begin{eqnarray*}
\frac{1}{1-x}F\left( \frac{1}{2},\frac{1}{2};4;x\right) &=&\left( 1-x\right)
^{2}F\left( \frac{7}{2},\frac{7}{2};4;x\right) , \\
\frac{1}{1-x}F\left( -\frac{1}{2},\frac{1}{2};4;x\right) &=&\left(
1-x\right) ^{3}F\left( \frac{9}{2},\frac{7}{2};4;x\right) , \\
\frac{1}{1-x}F\left( -\frac{1}{2},-\frac{1}{2};4;x\right) &=&\left(
1-x\right) ^{4}F\left( \frac{9}{2},\frac{9}{2};4;x\right) ,
\end{eqnarray*}%
which follows from the second formula of (\ref{F-near1}), we can obtain%
\begin{equation*}
\phi _{2}\left( n\right) =\frac{1}{225}\frac{\left( 2n+1\right) ^{2}\left(
32n^{2}+168n+225\right) }{\left( n+1\right) \left( n+2\right) \left(
n+3\right) }W_{n}^{2}-\frac{1}{6},
\end{equation*}%
\begin{equation*}
\phi _{1}\left( n\right) =\frac{1}{3}-\frac{2}{525}\frac{\left( 2n+1\right)
\left( 128n^{3}+736n^{2}+1236n+525\right) }{\left( n+1\right) \left(
n+2\right) \left( n+3\right) }W_{n}^{2},
\end{equation*}%
\begin{equation*}
\phi _{0}\left( n\right) =\frac{4}{3675}\frac{2048n^{4}+12\,800n^{3}+25%
\,664n^{2}+18\,288n+3675}{\left( n+1\right) \left( n+2\right) \left(
n+3\right) }W_{n}^{2}-\frac{2}{3},
\end{equation*}%
here we omit the details. Substituting $\phi _{i}\left( n\right) $ ($i=1,2,3$%
) into (\ref{bn-rf.}) and simplifying prove the recurrence relation (\ref%
{b.n+1-b.n}).

The proof of this lemma is done.
\end{proof}

Now we are in a position to prove our the first result.

\begin{proof}[Proof of Theorem \protect\ref{MT-1}]
Differentiation yields%
\begin{equation*}
\frac{2}{\pi }Q_{1}^{\prime }\left( x\right) =\frac{\frac{1}{4}F\left( \frac{%
3}{2},\frac{3}{2};2;x\right) \ln \left( c/\sqrt{1-x}\right) -F\left( \frac{1%
}{2},\frac{1}{2};1;x\right) \frac{1}{2\left( 1-x\right) }}{\left( \ln \left(
c/\sqrt{1-x}\right) \right) ^{2}}:=\frac{f_{1}\left( x\right) }{g_{1}\left(
x\right) },
\end{equation*}%
\begin{eqnarray*}
\frac{2}{\pi }Q_{1}^{\prime \prime }\left( x\right) &=&\dfrac{\frac{9}{32}%
F\left( \frac{5}{2},\frac{5}{2};3;x\right) \ln \left( c/\sqrt{1-x}\right)
-F\left( \frac{1}{2},\frac{1}{2};1;x\right) \frac{1}{2\left( 1-x\right) ^{2}}%
}{\left( \ln \left( c/\sqrt{1-x}\right) \right) ^{2}} \\
&&-\frac{\left( \frac{1}{4}F\left( \frac{3}{2},\frac{3}{2};2;x\right) \ln
\left( c/\sqrt{1-x}\right) -F\left( \frac{1}{2},\frac{1}{2};1;x\right) \frac{%
1}{2\left( 1-x\right) }\right) \frac{1}{1-x}}{\left( \ln \left( c/\sqrt{1-x}%
\right) \right) ^{3}}.
\end{eqnarray*}

The necessity easily follows from the inequality%
\begin{equation*}
\frac{2}{\pi }Q_{1}^{\prime \prime }\left( 0^{+}\right) =\dfrac{1}{32}\dfrac{%
\left( 3\ln c-4\right) ^{2}}{\ln ^{3}c}\leq 0.
\end{equation*}

To prove the sufficiency, it suffices to prove $\left( 2/\pi \right)
Q_{2}^{\prime }=f_{1}/g_{1}$ is strictly decreasing on $\left( 0,1\right) $
for $c=e^{4/3}$. Differentiations by (\ref{df}) and simplifications by the
third formula of (\ref{F-near1}) yield%
\begin{eqnarray*}
\frac{f_{1}^{\prime }\left( x\right) }{g_{1}^{\prime }\left( x\right) } &=&%
\frac{\frac{9}{32}F\left( \frac{5}{2},\frac{5}{2};3;x\right) \left( \frac{4}{%
3}-\ln \sqrt{1-x}\right) -F\left( \frac{1}{2},\frac{1}{2};1;x\right) \frac{1%
}{2\left( 1-x\right) ^{2}}}{\left( \frac{4}{3}-\ln \sqrt{1-x}\right) /\left(
1-x\right) } \\
&=&\frac{\frac{9}{16}F\left( \frac{1}{2},\frac{1}{2};3;x\right) \left( \frac{%
4}{3}-\ln \sqrt{1-x}\right) -F\left( \frac{1}{2},\frac{1}{2};1;x\right) }{%
2\left( 1-x\right) \left( \frac{4}{3}-\ln \sqrt{1-x}\right) }:=\frac{%
f_{2}\left( x\right) }{g_{2}\left( x\right) }.
\end{eqnarray*}

Since $g_{2}^{\prime }=\ln \left( 1-x\right) -5/3<0$, if we prove $%
H_{f_{2},g_{2}}=\left( f_{2}^{\prime }/g_{2}^{\prime }\right) g_{2}-f_{2}>0$
for $x\in \left( 0,1\right) $, then $\left( f_{1}^{\prime }/g_{1}^{\prime
}\right) ^{\prime }=\left( f_{2}/g_{2}\right) ^{\prime }=\left(
g_{2}^{\prime }/g_{2}^{2}\right) H_{f_{2},g_{2}}<0$. This yields $%
H_{f_{1},g_{1}}^{\prime }=\left( f_{1}^{\prime }/g_{1}^{\prime }\right)
^{\prime }g_{1}<0$, and so 
\begin{equation*}
H_{f_{1},g_{1}}\left( x\right) <H_{f_{1},g_{1}}\left( 0\right) =\frac{%
f_{1}^{\prime }\left( 0\right) }{g_{1}^{\prime }\left( 0\right) }g_{1}\left(
0\right) -f_{1}\left( 0\right) =\left( -\frac{3}{32}\right) \left( \frac{4}{3%
}\right) ^{2}-\left( -\frac{1}{6}\right) =0.
\end{equation*}%
By the relation $\left( f_{1}/g_{1}\right) ^{\prime }=\left( g_{1}^{\prime
}/g_{1}^{2}\right) H_{f_{1},g_{1}}<0$, which proves $Q_{1}^{\prime \prime
}\left( x\right) <0$.

Differentiation yields%
\begin{equation*}
f_{2}^{\prime }\left( x\right) =\frac{3}{64}F\left( \frac{3}{2},\frac{3}{2}%
;4;x\right) \left( \frac{4}{3}-\ln \sqrt{1-x}\right) +\frac{9}{32}\frac{1}{%
1-x}F\left( \frac{1}{2},\frac{1}{2};3;x\right) -\frac{1}{4}F\left( \frac{3}{2%
},\frac{3}{2};2;x\right) ,
\end{equation*}%
\begin{equation*}
\frac{g_{2}\left( x\right) }{g_{2}^{\prime }\left( x\right) }=\frac{2\left(
1-x\right) \left( \frac{4}{3}-\ln \sqrt{1-x}\right) }{\ln \left( 1-x\right) -%
\frac{5}{3}}=-\left( 1-x\right) \frac{8/3-\ln \left( 1-x\right) }{5/3-\ln
\left( 1-x\right) }.
\end{equation*}%
We first claim that $f_{2}^{\prime }\left( x\right) >0$ for $x\in \left(
0,1\right) $. Due to the first item is clearly positive, it suffices to show
that the sum of the second and third ones is also positive, which is
equivalent to%
\begin{eqnarray*}
f_{3}\left( x\right) &:&=\left( 1-x\right) \left[ \frac{9}{32}\frac{1}{1-x}%
F\left( \frac{1}{2},\frac{1}{2};3;x\right) -\frac{1}{4}F\left( \frac{3}{2},%
\frac{3}{2};2;x\right) \right] \\
&=&\frac{9}{32}F\left( \frac{1}{2},\frac{1}{2};3;x\right) -\frac{1}{4}%
F\left( \frac{1}{2},\frac{1}{2};2;x\right) >0.
\end{eqnarray*}%
Expanding in power series lead us to%
\begin{equation*}
f_{3}\left( x\right) =\frac{1}{32}-\frac{1}{16}\sum_{n=1}^{\infty }\frac{4n-1%
}{\left( n+2\right) \left( n+1\right) }W_{n}^{2}x^{n},
\end{equation*}%
where $W_{n}$ is defined by (\ref{Wn}). And, we easily get that $f_{3}\left(
1^{-}\right) =0$. It follows from Lemma \ref{L-sgnS} that $f_{3}\left(
x\right) >0$ for $x\in \left( 0,1\right) $.

Second, it is obvious that%
\begin{equation*}
\frac{g_{2}\left( x\right) }{g_{2}^{\prime }\left( x\right) }=-\left(
1-x\right) \frac{8/3-\ln \left( 1-x\right) }{5/3-\ln \left( 1-x\right) }>-%
\frac{8}{5}\left( 1-x\right) .
\end{equation*}%
It is then obtained that%
\begin{equation}
H_{f_{2},g_{2}}\left( x\right) =f_{2}^{\prime }\left( x\right) \frac{%
g_{2}\left( x\right) }{g_{2}^{\prime }\left( x\right) }-f_{2}\left( x\right)
>-\frac{8}{5}\left( 1-x\right) f_{2}^{\prime }\left( x\right) -f_{2}\left(
x\right) :=f_{4}\left( x\right) ,  \label{H>f4}
\end{equation}%
and it is enough to prove $f_{4}\left( x\right) >0$ for $x\in \left(
0,1\right) $. To do this, we use series expansion to get%
\begin{eqnarray*}
f_{4}\left( x\right) &=&\left[ -\frac{3}{40}\left( 1-x\right) F\left( \frac{3%
}{2},\frac{3}{2};4;x\right) -\frac{9}{16}F\left( \frac{1}{2},\frac{1}{2}%
;3;x\right) \right] \left( \frac{4}{3}-\ln \sqrt{1-x}\right) \\
&&-\frac{8}{5}\left[ \frac{9}{32}F\left( \frac{1}{2},\frac{1}{2};3;x\right) -%
\frac{1}{4}F\left( \frac{1}{2},\frac{1}{2};2;x\right) \right] +F\left( \frac{%
1}{2},\frac{1}{2};1;x\right)
\end{eqnarray*}%
\begin{eqnarray*}
&=&\left( \sum_{n=0}^{\infty }\frac{9}{40}\frac{11n-17}{\left( n+3\right)
\left( n+2\right) \left( n+1\right) }W_{n}^{2}x^{n}\right) \left( \frac{4}{3}%
+\frac{1}{2}x\sum_{n=0}^{\infty }\frac{x^{n}}{n+1}\right) \\
&&+\sum_{n=0}^{\infty }\frac{1}{10}\frac{\left( 10n^{2}+34n+19\right)
W_{n}^{2}}{\left( n+1\right) \left( n+2\right) }x^{n}
\end{eqnarray*}%
\begin{eqnarray*}
&=&\sum_{n=0}^{\infty }\frac{1}{5}\frac{\left( 5n^{3}+32n^{2}+77n+3\right) }{%
\left( n+1\right) \left( n+2\right) \left( n+3\right) }W_{n}^{2}x^{n} \\
&&+x\sum_{n=0}^{\infty }\sum_{k=0}^{n}\left( \frac{1}{20}\frac{\left(
11k-17\right) W_{k}^{2}}{\left( k+3\right) \left( k+2\right) \left(
k+1\right) }\frac{1}{n-k+1}\right) x^{n} \\
&=&\frac{1}{10}+\sum_{n=1}^{\infty }\alpha _{n}x^{n},
\end{eqnarray*}%
where%
\begin{equation*}
\alpha _{n}=\frac{1}{5}\frac{5n^{3}+32n^{2}+77n+3}{\left( n+1\right) \left(
n+2\right) \left( n+3\right) }W_{n}^{2}+\frac{9}{80}\beta _{n},
\end{equation*}%
here $\beta _{n}$ is defined by (\ref{b.n}).

Straightforward computations give%
\begin{equation*}
\alpha _{1}=-\frac{3}{40}\text{, }\alpha _{2}=-\frac{9}{640}\text{, }\alpha
_{3}=-\frac{31}{20\,480}\text{, }\alpha _{4}=\frac{243}{163\,840}>0.
\end{equation*}%
On the other hand, by Lemma \ref{L-b.n} and recurrence relation (\ref{Wn-rr}%
), we have%
\begin{eqnarray*}
\alpha _{n+1}-\lambda _{n}\alpha _{n} &=&\tfrac{1}{5}\tfrac{\left(
5n^{3}+47n^{2}+156n+117\right) }{\left( n+2\right) \left( n+3\right) \left(
n+4\right) }\left( \tfrac{n+1/2}{n+1}\right) ^{2}W_{n}^{2} \\
&&-\tfrac{1}{4}\tfrac{\left( 11n-6\right) \left( 2n+1\right) ^{2}}{\left(
n+4\right) \left( 11n-17\right) \left( n+1\right) }\tfrac{1}{5}\tfrac{%
5n^{3}+32n^{2}+77n+3}{\left( n+1\right) \left( n+2\right) \left( n+3\right) }%
W_{n}^{2}+\tfrac{9}{80}\left( \beta _{n+1}-\lambda _{n}\beta _{n}\right)
\end{eqnarray*}%
\begin{eqnarray*}
&=&\tfrac{1}{20}\tfrac{\left( 2n+1\right) ^{2}\left(
110n^{3}+262n^{2}-936n-1971\right) }{\left( 11n-17\right) \left( n+1\right)
^{2}\left( n+2\right) \left( n+3\right) \left( n+4\right) }W_{n}^{2}-\tfrac{1%
}{80}\tfrac{\left( 2n+1\right) \left(
880n^{4}+2404n^{3}-7319n^{2}-20\,301n-10\,404\right) }{\left( 11n-17\right)
\left( n+1\right) ^{2}\left( n+2\right) \left( n+3\right) \left( n+4\right) }%
W_{n}^{2} \\
&=&\frac{3}{80}\frac{\left( 2n+1\right) \left(
44n^{3}+293n^{2}+263n+840\right) }{\left( 11n-17\right) \left( n+1\right)
^{2}\left( n+2\right) \left( n+3\right) \left( n+4\right) }W_{n}^{2}>0,
\end{eqnarray*}%
for $n\geq 2$. This in combination with $\alpha _{4}>0$ reveals that $\alpha
_{n}>0$ for $n\geq 4$. Hence, we derive that%
\begin{eqnarray}
f_{4}\left( x\right) &=&\frac{1}{10}+\sum_{n=1}^{\infty }\alpha _{n}x^{n}>%
\frac{1}{10}+\sum_{n=1}^{3}\alpha _{n}x^{n}  \label{f4>f5} \\
&=&\frac{1}{10}-\frac{3}{40}x-\frac{9}{640}x^{2}-\frac{31}{20\,480}%
x^{3}:=f_{5}\left( x\right) .  \notag
\end{eqnarray}%
Since $f_{5}\left( 1\right) =193/20\,480>0$, by Lemma \ref{L-sgnS} we get
that $f_{5}\left( x\right) >0$. Taking into account (\ref{H>f4}) and (\ref%
{f4>f5}) proves $H_{f_{2},g_{2}}\left( x\right) >0$ for $x\in \left(
0,1\right) $, which completes the proof.
\end{proof}

By the properties of the concave functions we find that%
\begin{equation*}
\sqrt{Q_{1}\left( t\right) Q_{1}\left( 1-t\right) }\leq \frac{Q_{1}\left(
t\right) +Q_{1}\left( 1-t\right) }{2}\leq Q_{1}\left( \frac{1}{2}\right) ,
\end{equation*}%
which by putting $t=r^{2}\in \left( 0,1\right) $ gives the following
assertions.

\begin{corollary}
The inequality%
\begin{equation*}
\frac{\mathcal{K}\left( r\right) }{\ln \left( e^{4/3}/r^{\prime }\right) }+%
\frac{\mathcal{K}\left( r^{\prime }\right) }{\ln \left( e^{4/3}/r\right) }%
\leq 2c_{0},
\end{equation*}%
or equivalently,%
\begin{equation}
\mu \left( r\right) \leq \pi c_{0}\frac{\ln \left( e^{4/3}/r\right) }{%
\mathcal{K}\left( r\right) }-\frac{\pi }{2}\frac{\ln \left( e^{4/3}/r\right) 
}{\ln \left( e^{4/3}/r^{\prime }\right) }  \label{Mi-1}
\end{equation}%
holds for $r\in \left( 0,1\right) $, where $c_{0}=\frac{3\Gamma \left(
1/4\right) ^{2}}{2\left( 3\ln 2+8\right) \sqrt{\pi }}\approx 1.103\,7$ is
the best constant.
\end{corollary}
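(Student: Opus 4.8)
The plan is to obtain the Corollary as a direct consequence of the concavity established in Theorem~\ref{MT-1}, feeding it into the two-point Jensen inequality evaluated at the symmetric pair $\{t,1-t\}$ and then translating back through the substitution $t=r^{2}$. First I would fix $c=e^{4/3}$, so that Theorem~\ref{MT-1} guarantees $Q_{1}$ is strictly concave on $(0,1)$. The key structural observation is that the midpoint of $t$ and $1-t$ is always $1/2$, so concavity gives
\begin{equation*}
\frac{Q_{1}\left( t\right) +Q_{1}\left( 1-t\right) }{2}\leq Q_{1}\!\left( \frac{t+(1-t)}{2}\right) =Q_{1}\!\left( \frac{1}{2}\right)
\end{equation*}
for every $t\in (0,1)$, with equality precisely at $t=1/2$ by strictness. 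The companion AM--GM bound $\sqrt{Q_{1}(t)Q_{1}(1-t)}\le \tfrac12(Q_{1}(t)+Q_{1}(1-t))$ displayed above is immediate from the positivity of $Q_{1}$ on $(0,1)$, but it is not needed for the stated inequality, which uses only the Jensen step.

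Next I would substitute $t=r^{2}\in (0,1)$, so that $1-t=1-r^{2}=(r^{\prime })^{2}$. Using $r^{\prime }=\sqrt{1-r^{2}}$ together with $\sqrt{1-(r^{\prime })^{2}}=\sqrt{r^{2}}=r$, the definition (\ref{Q1}) yields
\begin{equation*}
Q_{1}(r^{2})=\frac{\mathcal{K}(r)}{\ln \left( e^{4/3}/r^{\prime }\right) },\qquad Q_{1}\!\left( (r^{\prime })^{2}\right) =\frac{\mathcal{K}(r^{\prime })}{\ln \left( e^{4/3}/r\right) }.
\end{equation*}
Inserting these into the Jensen bound and writing $c_{0}:=Q_{1}(1/2)$ produces exactly the asserted inequality. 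The constant is then computed by evaluating at $x=1/2$: there $\sqrt{1-x}=1/\sqrt{2}$, so $\ln \left( e^{4/3}/\sqrt{1/2}\right) =\tfrac{4}{3}+\tfrac{1}{2}\ln 2=(3\ln 2+8)/6$, while the classical lemniscatic value $\mathcal{K}(1/\sqrt{2})=\Gamma (1/4)^{2}/(4\sqrt{\pi })$ supplies the numerator, giving
\begin{equation*}
c_{0}=Q_{1}\!\left( \tfrac{1}{2}\right) =\frac{\Gamma (1/4)^{2}/(4\sqrt{\pi })}{(3\ln 2+8)/6}=\frac{3\,\Gamma (1/4)^{2}}{2(3\ln 2+8)\sqrt{\pi }},
\end{equation*}
in agreement with the stated value $\approx 1.1037$. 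Sharpness is built into the method: the lone inequality used is the Jensen step, and by strict concavity it is an equality exactly when $t=1/2$, i.e.\ at $r=1/\sqrt{2}$ (where $r=r^{\prime }$); there the left-hand side attains $2c_{0}$, so the constant cannot be lowered and is best possible.

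It remains to record the equivalent $\mu$-form. Starting from the first inequality I would isolate the second summand, $\mathcal{K}(r^{\prime })/\ln \left( e^{4/3}/r\right) \le 2c_{0}-\mathcal{K}(r)/\ln \left( e^{4/3}/r^{\prime }\right) $, then multiply through by $\ln \left( e^{4/3}/r\right) $, divide by $\mathcal{K}(r)$, and multiply by $\pi /2$; the left-hand side becomes $\tfrac{\pi }{2}\mathcal{K}(r^{\prime })/\mathcal{K}(r)=\mu (r)$, and the right-hand side is precisely the expression in (\ref{Mi-1}). Every step here is reversible, so the two inequalities are genuinely equivalent.

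I do not anticipate a genuine analytic obstacle, since the entire substance resides in Theorem~\ref{MT-1}; the work downstream is a concavity-to-Jensen deduction plus bookkeeping. The only two non-mechanical points are recognizing that the pair $\{r^{2},(r^{\prime })^{2}\}$ always has midpoint $1/2$ — which is exactly what makes midpoint concavity bite and simultaneously forces the sharp case — and recalling the closed form $\mathcal{K}(1/\sqrt{2})=\Gamma (1/4)^{2}/(4\sqrt{\pi })$ that pins down $c_{0}$ in closed form.
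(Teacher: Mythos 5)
Your proposal is correct and follows essentially the same route as the paper, which derives the corollary from Theorem~\ref{MT-1} via the two-point Jensen (midpoint concavity) bound $\tfrac12\left(Q_{1}(t)+Q_{1}(1-t)\right)\leq Q_{1}\left(\tfrac12\right)$ and the substitution $t=r^{2}$, with $c_{0}=Q_{1}(1/2)$ evaluated through $\mathcal{K}(1/\sqrt{2})=\Gamma(1/4)^{2}/(4\sqrt{\pi})$. Your write-up merely makes explicit what the paper leaves implicit (the sharpness at $r=1/\sqrt{2}$ and the reversible algebra giving the $\mu$-form), and both are accurate.
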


\begin{corollary}
The inequality 
\begin{equation}
\mathcal{K}\left( r\right) \mathcal{K}\left( r^{\prime }\right) \leq
c_{0}^{2}\ln \left( \frac{e^{4/3}}{r}\right) \ln \left( \frac{e^{4/3}}{%
r^{\prime }}\right)  \label{Mi-2}
\end{equation}%
holds for $r\in \left( 0,1\right) $ with the best constant $c_{0}=\frac{%
3\Gamma \left( 1/4\right) ^{2}}{2\left( 3\ln 2+8\right) \sqrt{\pi }}\approx
1.103\,7$.
\end{corollary}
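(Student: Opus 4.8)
The plan is to derive the inequality directly from the concavity of $Q_1$ established in Theorem~\ref{MT-1} for the critical value $c=e^{4/3}$, paralleling the chain displayed just before the preceding corollary. First I would record the symmetry that produces the two logarithms: setting $t=r^{2}$ (so that $\sqrt{t}=r$ and $\sqrt{1-t}=r^{\prime}$, while $\sqrt{1-(1-t)}=r$), one has
\[
Q_1(t)=\frac{\mathcal{K}(r)}{\ln(e^{4/3}/r^{\prime})},\qquad
Q_1(1-t)=\frac{\mathcal{K}(r^{\prime})}{\ln(e^{4/3}/r)}.
\]
Hence the product $Q_1(t)Q_1(1-t)$ equals $\mathcal{K}(r)\mathcal{K}(r^{\prime})$ divided by $\ln(e^{4/3}/r)\ln(e^{4/3}/r^{\prime})$, so the claim (\ref{Mi-2}) is equivalent to the bound $Q_1(t)Q_1(1-t)\le c_0^{2}$.

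Next I would combine the AM--GM inequality with Jensen's inequality for the concave $Q_1$: for every $t\in(0,1)$,
\[
\sqrt{Q_1(t)\,Q_1(1-t)}\;\le\;\frac{Q_1(t)+Q_1(1-t)}{2}\;\le\;Q_1\!\left(\frac{t+(1-t)}{2}\right)=Q_1\!\left(\tfrac12\right),
\]
the second inequality being concavity evaluated at the midpoint of $t$ and $1-t$. Squaring reduces everything to computing $Q_1(1/2)$.

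The only genuine computation is this evaluation. With $c=e^{4/3}$,
\[
Q_1\!\left(\tfrac12\right)=\frac{\mathcal{K}(1/\sqrt2)}{\ln\!\left(e^{4/3}\sqrt2\right)}=\frac{6\,\mathcal{K}(1/\sqrt2)}{8+3\ln2},
\]
since $\ln(e^{4/3}\sqrt2)=\tfrac43+\tfrac12\ln2=(8+3\ln2)/6$. Invoking the classical special value $\mathcal{K}(1/\sqrt2)=\Gamma(1/4)^{2}/(4\sqrt\pi)$ gives $Q_1(1/2)=3\Gamma(1/4)^{2}/\bigl(2(3\ln2+8)\sqrt\pi\bigr)=c_0$. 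The chain above then reads $\sqrt{Q_1(t)Q_1(1-t)}\le c_0$, and substituting $t=r^{2}$ and clearing the two logarithms yields (\ref{Mi-2}).

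Finally, for the sharpness of $c_0^{2}$ I would note that at $t=\tfrac12$, that is $r=1/\sqrt2$, both inequalities in the chain become equalities --- AM--GM because the two factors coincide, and Jensen because its argument is already the midpoint --- so (\ref{Mi-2}) holds with equality there and the constant cannot be reduced. I anticipate no real obstacle: the concavity driving the argument is furnished by Theorem~\ref{MT-1}, and the single nontrivial ingredient is recalling the closed form of $\mathcal{K}(1/\sqrt2)$.
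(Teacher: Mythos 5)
Your proposal is correct and is essentially the paper's own argument: the paper obtains (\ref{Mi-2}) from exactly the chain $\sqrt{Q_{1}(t)Q_{1}(1-t)}\leq \tfrac{1}{2}\left( Q_{1}(t)+Q_{1}(1-t)\right) \leq Q_{1}\left( \tfrac{1}{2}\right)$ (AM--GM plus concavity from Theorem \ref{MT-1} with $c=e^{4/3}$), followed by the substitution $t=r^{2}$. Your explicit evaluation $Q_{1}(1/2)=c_{0}$ via $\mathcal{K}(1/\sqrt{2})=\Gamma(1/4)^{2}/(4\sqrt{\pi})$ and the sharpness observation that both inequalities become equalities at $r=1/\sqrt{2}$ merely spell out details the paper leaves implicit.
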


\begin{corollary}
\label{C-K-lc}The function $\left( 1-x\right) ^{1/4}\mathcal{K}\left( \sqrt{x%
}\right) $ is strictly log-concave on $\left( 0,1\right) $, and therefore,
we have%
\begin{equation}
\sqrt{rr^{\prime }}\mathcal{K}\left( r\right) \mathcal{K}\left( r^{\prime
}\right) \leq \frac{1}{\sqrt{2}}\mathcal{K}\left( \frac{1}{\sqrt{2}}\right)
^{2}\approx 2.430\,7.  \label{rr'KK'<}
\end{equation}
\end{corollary}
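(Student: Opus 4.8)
The plan is to reduce the strict log-concavity to the concavity statement of Theorem~\ref{MT-1} by an additive decomposition of $\ln\phi$, where $\phi(x)=(1-x)^{1/4}\mathcal{K}(\sqrt{x})$, and then to read off the inequality \eqref{rr'KK'<} from the midpoint inequality for log-concave functions. First I would observe that, with $c=e^{4/3}$, one has $\ln\bigl(e^{4/3}/\sqrt{1-x}\bigr)=\tfrac{4}{3}-\tfrac12\ln(1-x)=:L(x)$, so that
\[
\ln\phi(x)=\tfrac14\ln(1-x)+\ln\mathcal{K}(\sqrt{x})=\ln Q_{1}(x)+\psi(x),
\]
where $\psi(x)=\tfrac14\ln(1-x)+\ln L(x)$ and $Q_{1}$ is the function \eqref{Q1} with $c=e^{4/3}$. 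By Theorem~\ref{MT-1} and the Remark following it, $Q_{1}$ is a positive strictly concave function on $(0,1)$, hence log-concave, so $\ln Q_{1}$ is concave. It therefore suffices to prove that $\psi$ is strictly concave, since the sum of a concave and a strictly concave function is strictly concave.

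To handle $\psi$, I would substitute $w=1-x$ (an affine change of variable, so that concavity in $x$ and in $w$ coincide) and set $M=M(w)=\tfrac43-\tfrac12\ln w=L(x)$, giving $\psi=\tfrac14\ln w+\ln M$ with $M'=-\tfrac{1}{2w}$. A direct computation then yields $\psi'(w)=\dfrac{1}{4w}\cdot\dfrac{M-2}{M}$ and
\[
\psi''(w)=-\frac{(M-1)^{2}}{4w^{2}M^{2}}.
\]
Since $\ln w<0$ for $w\in(0,1)$ we have $M>\tfrac43>1$, so $\psi''(w)<0$ throughout. This short computation is the only genuine step and the main (mild) obstacle; once it is in place, $\ln\phi=\ln Q_{1}+\psi$ is strictly concave, i.e. $\phi$ is strictly log-concave on $(0,1)$.

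Finally, strict log-concavity yields the midpoint inequality
\[
\phi(x)\,\phi(1-x)\le\phi\!\left(\tfrac12\right)^{2}\qquad(x\in(0,1)),
\]
obtained from $\ln\phi\bigl(\tfrac{x+(1-x)}{2}\bigr)\ge\tfrac12\bigl(\ln\phi(x)+\ln\phi(1-x)\bigr)$. Putting $x=r^{2}$, so that $\sqrt{x}=r$ and $\sqrt{1-x}=r'$, one computes $\phi(r^{2})=\sqrt{r'}\,\mathcal{K}(r)$ and $\phi(1-r^{2})=\sqrt{r}\,\mathcal{K}(r')$, whence $\phi(r^{2})\phi(1-r^{2})=\sqrt{rr'}\,\mathcal{K}(r)\mathcal{K}(r')$, while $\phi\!\left(\tfrac12\right)^{2}=\tfrac{1}{\sqrt2}\mathcal{K}\!\left(1/\sqrt2\right)^{2}$. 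This is exactly \eqref{rr'KK'<}, and the numerical value $\approx 2.4307$ follows from the classical evaluation $\mathcal{K}(1/\sqrt2)=\Gamma(1/4)^{2}/(4\sqrt{\pi})$.

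I expect no serious difficulty beyond locating the decomposition $\ln\phi=\ln Q_{1}+\psi$; all remaining work is the elementary second-derivative computation for $\psi$ and the bookkeeping that converts $\phi(r^{2})\phi(1-r^{2})\le\phi(1/2)^{2}$ into the stated inequality.
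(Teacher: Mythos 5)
Your proposal is correct and takes essentially the same route as the paper: both decompose $\left(1-x\right)^{1/4}\mathcal{K}\left(\sqrt{x}\right)$ into $Q_{1}$ times the factor $\left(1-x\right)^{1/4}\ln\left(e^{4/3}/\sqrt{1-x}\right)$ (your $\psi$ is precisely the logarithm of that factor), invoke Theorem \ref{MT-1} for the log-concavity of $Q_{1}$, and conclude via the midpoint inequality with $x=r^{2}$. The only cosmetic difference is that the paper checks concavity of the factor itself, computing $\frac{d^{2}}{dx^{2}}\left[\left(1-x\right)^{1/4}\ln\left(e^{4/3}/\sqrt{1-x}\right)\right]=\frac{3}{32}\frac{\ln\left(1-x\right)}{\left(1-x\right)^{7/4}}<0$ and then using that a positive concave function is log-concave, whereas you differentiate its logarithm directly, obtaining $\psi''(w)=-\left(M-1\right)^{2}/\left(4w^{2}M^{2}\right)<0$, which incidentally makes the strictness explicit.
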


\begin{proof}
We easily see that $Q_{1}\left( x\right) $ is also log-concave on $\left(
0,1\right) $ by Theorem \ref{MT-1}. Moreover, since%
\begin{equation*}
\frac{d^{2}}{dx^{2}}\left( \left( 1-x\right) ^{1/4}\ln \frac{e^{4/3}}{\sqrt{%
1-x}}\right) =\frac{3}{32}\frac{\ln \left( 1-x\right) }{\left( 1-x\right)
^{7/4}}<0,
\end{equation*}%
the function $\left( 1-x\right) ^{1/4}\ln \left( e^{4/3}/\sqrt{1-x}\right) $
is positive and concave on $\left( 0,1\right) $, and is also log-concave on $%
\left( 0,1\right) $. Consequently, the function%
\begin{equation*}
\left( 1-x\right) ^{1/4}\mathcal{K}\left( \sqrt{x}\right) =\left( 1-x\right)
^{1/4}\ln \left( \frac{e^{4/3}}{\sqrt{1-x}}\right) \times Q_{1}\left(
x\right)
\end{equation*}%
is log-concave on $\left( 0,1\right) $. Therefore, we obtain%
\begin{equation*}
\left( 1-x\right) ^{1/4}\mathcal{K}\left( \sqrt{x}\right) x^{1/4}\mathcal{K}%
\left( \sqrt{1-x}\right) \leq \left[ \left( \frac{1}{2}\right) ^{1/4}%
\mathcal{K}\left( \sqrt{1/2}\right) \right] ^{2},
\end{equation*}%
which proves the inequality (\ref{rr'KK'<}).
\end{proof}

\begin{remark}
It was proved in \cite[Corollary 3.13]{Anderson-SIAM-JMA-23-1992} that for $%
r\in \left( 0,1\right) $,%
\begin{equation}
\frac{2}{\pi }rr^{\prime }\mathcal{K}\left( r\right) \mathcal{K}\left(
r^{\prime }\right) <\min \left( r\ln \frac{4}{r},r^{\prime }\ln \frac{4}{%
r^{\prime }}\right) =\left\{ 
\begin{array}{cc}
r\ln \dfrac{4}{r} & \text{if }r\in \left( 0,\frac{1}{\sqrt{2}}\right) , \\ 
r^{\prime }\ln \dfrac{4}{r^{\prime }} & \text{if }r\in \left( \frac{1}{\sqrt{%
2}},1\right) .%
\end{array}%
\right.  \label{KK<A}
\end{equation}%
We write inequality (\ref{Mi-2}) as%
\begin{equation}
\frac{2}{\pi }rr^{\prime }\mathcal{K}\left( r\right) \mathcal{K}\left(
r^{\prime }\right) \leq \frac{2}{\pi }c_{0}^{2}rr^{\prime }\ln \left( \frac{%
e^{4/3}}{r}\right) \ln \left( \frac{e^{4/3}}{r^{\prime }}\right) .
\label{KK<Y}
\end{equation}%
Numeric computation shows that the upper bounds in (\ref{KK<A}) and in (\ref%
{KK<Y}) are not comparable. But the minimum upper bound given in (\ref{KK<A}%
) equals $\left( 5\sqrt{2}\ln 2\right) /4\approx 1.\,2253$, which is greater
than one given in (\ref{KK<Y}), because%
\begin{equation*}
\frac{d^{2}}{dx^{2}}\sqrt{x}\ln \left( \frac{e^{4/3}}{\sqrt{x}}\right) =-%
\frac{8+3\ln \left( 1/x\right) }{24x^{3/2}}<0,
\end{equation*}%
$x\mapsto \sqrt{x}\ln \left( e^{4/3}/\sqrt{x}\right) $ is concave, which
also implies that it is log-concave on $\left( 0,1\right) $, and so%
\begin{equation*}
\frac{2}{\pi }c_{0}^{2}rr^{\prime }\ln \left( \frac{e^{4/3}}{r}\right) \ln
\left( \frac{e^{4/3}}{r^{\prime }}\right) \leq \frac{2}{\pi }c_{0}^{2}\left[ 
\sqrt{1/2}\ln \left( \frac{e^{4/3}}{\sqrt{1/2}}\right) \right] ^{2}=\frac{%
\Gamma \left( 1/4\right) ^{4}}{16\pi ^{2}}\approx 1.\,0942.
\end{equation*}
\end{remark}

\begin{remark}
It was proved \cite[Theorem 2.2 (3)]{Anderson-SIAM-JMA-21-1990} $r^{\prime }%
\mathcal{K}\left( r\right) ^{2}$ is strictly decreasing from on $[0,1)$.
This in combination with Corollary \ref{C-K-lc} shows that $\left(
1-x\right) ^{1/4}\mathcal{K}\left( \sqrt{x}\right) $ is strictly decreasing
and log-concave on $\left( 0,1\right) $.
\end{remark}

Moreover, it is clear that the functions $\left[ Q_{1}\left( t\right)
-Q_{1}\left( 0\right) \right] /t$ and $\left[ Q_{1}\left( 1\right)
-Q_{1}\left( t\right) \right] /\left( 1-t\right) $ are decreasing on $\left(
0,1\right) $, which yield the following corollary.

\begin{corollary}
Both the functions%
\begin{eqnarray*}
r &\mapsto &\frac{1}{r^{2}}\left( \frac{\mathcal{K}\left( r\right) }{\ln
\left( e^{4/3}/r^{\prime }\right) }-\frac{3\pi }{8}\right) , \\
r &\mapsto &\frac{1}{\left( r^{\prime }\right) ^{2}}\left( 1-\frac{\mathcal{K%
}\left( r\right) }{\ln \left( e^{4/3}/r^{\prime }\right) }\right)
\end{eqnarray*}%
are strictly decreasing on $\left( 0,1\right) $. Consequently, the double
inequality%
\begin{equation}
1+\left( \frac{3\pi }{8}-1\right) \left( r^{\prime }\right) ^{2}<\frac{%
\mathcal{K}\left( r\right) }{\ln \left( e^{4/3}/r^{\prime }\right) }<\frac{%
21\pi }{64}+\frac{3\pi }{64}\left( r^{\prime }\right) ^{2}  \label{Mi-3}
\end{equation}%
holds for $r\in \left( 0,1\right) $. The lower and upper bounds are sharp.
\end{corollary}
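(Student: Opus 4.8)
The plan is to derive the whole corollary from the strict concavity of $Q_{1}$ on $\left( 0,1\right) $ established in Theorem \ref{MT-1} (with $c=e^{4/3}$), together with two endpoint values and the one-sided derivative $Q_{1}^{\prime }\left( 0^{+}\right) $. First I would record the boundary behaviour
\[
Q_{1}\left( 0^{+}\right) =\frac{\mathcal{K}\left( 0\right) }{\ln e^{4/3}}=\frac{\pi /2}{4/3}=\frac{3\pi }{8},\qquad Q_{1}\left( 1^{-}\right) =1,
\]
where the first equality uses $\mathcal{K}\left( 0\right) =\pi /2$ and the second uses $\mathcal{K}\left( r\right) \sim \ln \left( 4/r^{\prime }\right) $ as $r\to 1^{-}$ (both $\mathcal{K}\left( \sqrt{x}\right) $ and $\ln \left( e^{4/3}/\sqrt{1-x}\right) $ grow like $-\tfrac{1}{2}\ln \left( 1-x\right) $, so the ratio tends to $1$). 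I would also compute $Q_{1}^{\prime }\left( 0^{+}\right) $; this is the only genuinely computational step, and it is already available from the proof of Theorem \ref{MT-1}, where $\left( 2/\pi \right) Q_{1}^{\prime }=f_{1}/g_{1}$. Evaluating at $x=0$ with $c=e^{4/3}$ gives $f_{1}\left( 0\right) =\tfrac14\cdot\tfrac43-\tfrac12=-\tfrac16$ and $g_{1}\left( 0\right) =(4/3)^{2}=16/9$, whence $\left( 2/\pi \right) Q_{1}^{\prime }\left( 0\right) =-3/32$ and $Q_{1}^{\prime }\left( 0^{+}\right) =-3\pi /64$.

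Next I would invoke the standard three-chord characterization of concavity: for a strictly concave function the secant slope $x\mapsto \left[ Q_{1}\left( x\right) -Q_{1}\left( a\right) \right] /\left( x-a\right) $ is strictly decreasing for each fixed $a$. Applying this with $a=0$ and with $a=1$ shows that $P\left( t\right) :=\left[ Q_{1}\left( t\right) -Q_{1}\left( 0\right) \right] /t$ and $R\left( t\right) :=\left[ Q_{1}\left( 1\right) -Q_{1}\left( t\right) \right] /\left( 1-t\right) $ are strictly decreasing on $\left( 0,1\right) $. Writing $t=r^{2}$ and using $\left( r^{\prime }\right) ^{2}=1-r^{2}$, $Q_{1}\left( 0\right) =3\pi /8$ and $Q_{1}\left( 1\right) =1$, the two functions in the statement are precisely $P\left( r^{2}\right) $ and $R\left( r^{2}\right) $; since $r\mapsto r^{2}$ is an increasing bijection of $\left( 0,1\right) $, the asserted monotonicity in $r$ follows at once.

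For the double inequality (\ref{Mi-3}) I would simply read off the limiting values of the decreasing function $P$, namely $P\left( 1^{-}\right) =Q_{1}\left( 1\right) -Q_{1}\left( 0\right) =1-3\pi /8$ (the slope of the chord joining the endpoints) and $P\left( 0^{+}\right) =Q_{1}^{\prime }\left( 0^{+}\right) =-3\pi /64$ (the slope of the tangent at $0$). Strict monotonicity then yields $1-3\pi /8<\left[ Q_{1}\left( x\right) -3\pi /8\right] /x<-3\pi /64$ for $x\in \left( 0,1\right) $; multiplying by $x$, adding $3\pi /8$, and substituting $x=1-\left( r^{\prime }\right) ^{2}$ converts the lower estimate into $Q_{1}>1+\left( 3\pi /8-1\right) \left( r^{\prime }\right) ^{2}$ and the upper estimate into $Q_{1}<21\pi /64+\left( 3\pi /64\right) \left( r^{\prime }\right) ^{2}$, which is exactly (\ref{Mi-3}). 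Geometrically the lower bound is the chord through $\left( 0,3\pi /8\right) $ and $\left( 1,1\right) $ and the upper bound is the tangent line at $x=0$, both sandwiching the concave graph.

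Finally, sharpness is read off from the endpoints: at $r=0$ (so $\left( r^{\prime }\right) ^{2}=1$) both bounds collapse to $Q_{1}\left( 0\right) =3\pi /8$, while at $r=1$ the lower bound collapses to $Q_{1}\left( 1\right) =1$, so neither constant can be improved. I do not anticipate a serious obstacle, since all the analytic content is contained in Theorem \ref{MT-1}; the only care required is the bookkeeping of the change of variable $x=r^{2}=1-\left( r^{\prime }\right) ^{2}$ and the evaluation of $Q_{1}^{\prime }\left( 0^{+}\right) =-3\pi /64$, which I would borrow verbatim from the proof of Theorem \ref{MT-1}.
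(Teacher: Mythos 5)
Your proposal is correct and follows essentially the same route as the paper, which obtains this corollary precisely by observing that strict concavity of $Q_{1}$ (Theorem \ref{MT-1}) makes the secant-slope functions $\left[ Q_{1}\left( t\right) -Q_{1}\left( 0\right) \right] /t$ and $\left[ Q_{1}\left( 1\right) -Q_{1}\left( t\right) \right] /\left( 1-t\right) $ decreasing and then substituting $t=r^{2}$. Your endpoint computations $Q_{1}\left( 0^{+}\right) =3\pi /8$, $Q_{1}\left( 1^{-}\right) =1$ and $Q_{1}^{\prime }\left( 0^{+}\right) =-3\pi /64$ (via $f_{1}\left( 0\right) /g_{1}\left( 0\right) =-3/32$ from the proof of Theorem \ref{MT-1}) are all accurate, and they correctly yield the chord and tangent-line bounds in (\ref{Mi-3}) together with their sharpness.
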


\begin{remark}
Inequalities (\ref{Mi-3}) offer a new type of sharp lower and upper bounds.
Similar inequalities can be found in \cite{Anderson-SIAM-JMA-21-1990}, \cite%
{Qiu-SIAM-JMA-27(3)-1996}. \cite{Alzer-MPCPS-124(2)-1998}.
\end{remark}

\section{Proofs of Theorems 2 and 3}

\begin{proof}[Proof of Theorem \protect\ref{MT-2}]
Let $f\left( x\right) =F\left( \frac{1}{2},\frac{1}{2};1;x\right) $ and $%
g\left( x\right) =\ln \left( 1+4/\sqrt{1-x}\right) $, where $x=r^{2}\in
\left( 0,1\right) $. Differentiations yield%
\begin{eqnarray*}
f^{\prime }\left( x\right) &=&\frac{1}{4}F\left( \frac{3}{2},\frac{3}{2}%
;2;x\right) =\frac{1}{4}\frac{1}{1-x}F\left( \frac{1}{2},\frac{1}{2}%
;2;x\right) , \\
g^{\prime }\left( x\right) &=&\frac{2}{\left( 4+\sqrt{1-x}\right) \left(
1-x\right) }=\frac{2\left( 4-\sqrt{1-x}\right) }{\left( 15+x\right) \left(
1-x\right) },
\end{eqnarray*}%
and then, we have%
\begin{equation*}
\frac{f^{\prime }\left( x\right) }{g^{\prime }\left( x\right) }=\frac{\frac{1%
}{4}\frac{1}{1-x}F\left( \frac{1}{2},\frac{1}{2};2;x\right) }{\frac{2\left(
4-\sqrt{1-x}\right) }{\left( 15+x\right) \left( 1-x\right) }}=\frac{\left(
15+x\right) F\left( \frac{1}{2},\frac{1}{2};2;x\right) }{8\left( 4-\sqrt{1-x}%
\right) }:=\frac{f_{1}\left( x\right) }{g_{1}\left( x\right) }.
\end{equation*}%
Expanding in power series leads to%
\begin{equation*}
\frac{f_{1}\left( x\right) }{g_{1}\left( x\right) }=\frac{%
15+\sum_{n=1}^{\infty }\frac{64n^{2}-56n+15}{\left( 2n-1\right) ^{2}\left(
n+1\right) }W_{n}^{2}x^{n}}{24+\sum_{n=1}^{\infty }\frac{4\Gamma \left(
n-1/2\right) }{\Gamma \left( 1/2\right) \Gamma \left( n+1\right) }x^{n}}:=%
\frac{\sum_{n=0}^{\infty }a_{n}x^{n}}{\sum_{n=0}^{\infty }b_{n}x^{n}},
\end{equation*}%
where%
\begin{eqnarray*}
a_{n} &=&\left\{ 
\begin{array}{ll}
15 & \text{for }n=0, \\ 
\dfrac{64n^{2}-56n+15}{\left( 2n-1\right) ^{2}\left( n+1\right) }W_{n}^{2} & 
\text{for }n\geq 1,%
\end{array}%
\right. \\
b_{n} &=&\left\{ 
\begin{array}{lc}
24 & \text{for }n=0, \\ 
\dfrac{8}{2n-1}W_{n} & \text{for }n\geq 1,%
\end{array}%
\right.
\end{eqnarray*}%
here $W_{n}$ is the Wallis ratio defined by (\ref{Wn}). It is easy to check
that the sequence $\{a_{n}/b_{n}\}_{n\geq 0}$ is increasing for $n=0,1$ and
decreasing for $n\geq 1$. In fact, we have%
\begin{equation*}
\frac{a_{n}}{b_{n}}=\left\{ 
\begin{array}{ll}
\dfrac{5}{8} & \text{for }n=0, \\ 
\dfrac{1}{8}\dfrac{64n^{2}-56n+15}{\left( 2n-1\right) \left( n+1\right) }%
W_{n} & \text{for }n\geq 1.%
\end{array}%
\right.
\end{equation*}%
It then follows that%
\begin{equation*}
\frac{a_{1}}{b_{1}}-\frac{a_{0}}{b_{0}}=\frac{23}{32}-\frac{5}{8}=\frac{3}{32%
}>0,
\end{equation*}%
and for $n\geq 1$,%
\begin{equation*}
\frac{a_{n+1}}{b_{n+1}}\left/ \frac{a_{n}}{b_{n}}\right. -1=-\frac{1}{2}%
\frac{64n^{2}-168n+83}{\left( n+2\right) \left( 64n^{2}-56n+15\right) }<0,
\end{equation*}%
which proves the piecewise monotonicity of the sequence $\{a_{n}/b_{n}\}_{n%
\geq 0}$.

On the other hand, we find that%
\begin{eqnarray*}
H_{f_{1},g_{1}}\left( x\right) &=&\frac{f_{1}^{\prime }\left( x\right) }{%
g_{1}^{\prime }\left( x\right) }g_{1}\left( x\right) -f_{1}\left( x\right) \\
&=&\frac{F\left( \frac{1}{2},\frac{1}{2};2;x\right) +\left( 15+x\right) 
\frac{1}{8}F\left( \frac{3}{2},\frac{3}{2}3;x\right) }{\frac{4}{\sqrt{1-x}}}
\\
&&\times 8\left( 4-\sqrt{1-x}\right) -\left( 15+x\right) F\left( \frac{1}{2},%
\frac{1}{2};2;x\right) \\
&=&-\frac{64}{\pi }<0\text{ ad }x\rightarrow 1^{-}.
\end{eqnarray*}%
From Lemma \ref{L-A/B-pm} it is deduced that there is a $x_{1}\in \left(
0,1\right) $ such that $f_{1}/g_{1}=f^{\prime }/g^{\prime }$ is increasing
on $\left( 0,x_{1}\right) $ and decreasing on $\left( x_{1},1\right) $.
Using the formulas (\ref{d-f/g}) and (\ref{dHf,g}) together with $g>0$ we
find that $H_{f,g}^{\prime }>0$ for $x\in \left( 0,x_{1}\right) $ and $%
H_{f,g}^{\prime }<0$ for $x\in \left( x_{1},1\right) $. Now, due to $%
g^{\prime }>0$, if we show that $H_{f,g}\left( 0^{+}\right) \geq 0$ and $%
H_{f,g}\left( 1^{-}\right) \geq 0$, then we have $\left( f/g\right) ^{\prime
}=\left( g^{\prime }/g^{2}\right) H_{f,g}>0$, and the proof is done. A
simple computation yields%
\begin{eqnarray*}
H_{f,g}\left( x\right) &=&\frac{f^{\prime }\left( x\right) }{g^{\prime
}\left( x\right) }g\left( x\right) -f\left( x\right) \\
&=&\frac{\left( 15+x\right) F\left( \frac{1}{2},\frac{1}{2};2;x\right) }{%
8\left( 4-\sqrt{1-x}\right) }\ln \left( 1+\frac{4}{\sqrt{1-x}}\right)
-F\left( \frac{1}{2},\frac{1}{2};1;x\right) \\
&\rightarrow &\frac{15}{24}\ln 5-1>0\text{ as }x\rightarrow 0^{+}\text{.}
\end{eqnarray*}

Application of asymptotic formulas given in Lemma \ref{L-af-1,2,3} gives%
\begin{eqnarray*}
H_{f,g}\left( x\right) &\thicksim &\left[ \frac{\left( 16-t\right) \left( 
\frac{4}{\pi }-\frac{t}{\pi }\left( \ln \left( 16/t\right) -3\right) \right) 
}{8\left( 4-\sqrt{t}\right) }\ln \left( 1+\frac{4}{\sqrt{t}}\right) \right.
\\
&&\left. -\left( \frac{\ln \left( 16/t\right) }{\pi }+\frac{t}{4\pi }\left(
\ln \left( 16/t\right) -2\right) \right) \right] \\
&\rightarrow &0\text{ as }t=1-x\rightarrow 0^{+}\text{.}
\end{eqnarray*}

This completes the proof.
\end{proof}

Using the monotonicity of $Q_{2}$, we obtain the following Corollary.

\begin{corollary}
The inequality%
\begin{equation*}
\mu =\frac{\pi }{2}\frac{\mathcal{K}\left( r^{\prime }\right) }{\mathcal{K}%
\left( r\right) }>\frac{\pi }{2}\frac{\ln \left( 1+4/r\right) }{\ln \left(
1+4/r^{\prime }\right) }
\end{equation*}%
holds for $r\in \left( 0,1/\sqrt{2}\right) $. It is reversed for $r\in
\left( 1/\sqrt{2},1\right) $.
\end{corollary}

\begin{proof}[Proof of Theorem \protect\ref{MT-3}]
Differentiations yield%
\begin{eqnarray*}
D^{\prime }\left( x\right) &=&\frac{\pi }{8}F\left( \frac{3}{2},\frac{3}{2}%
;2;x\right) -\frac{2\left( 4-\sqrt{1-x}\right) }{\left( 15+x\right) \left(
1-x\right) }, \\
D^{\prime \prime }\left( x\right) &=&\frac{\pi }{8}\frac{9}{8}F\left( \frac{5%
}{2},\frac{5}{2};3;x\right) -\frac{16\left( x+7\right) -\left( 3x+13\right) 
\sqrt{1-x}}{\left( x+15\right) ^{2}\left( 1-x\right) ^{2}} \\
&:&=\frac{h\left( x\right) }{\left( x+15\right) ^{2}\left( 1-x\right) ^{2}},
\end{eqnarray*}%
where%
\begin{equation*}
h\left( x\right) =\frac{9\pi }{64}\left( x+15\right) ^{2}F\left( \frac{1}{2},%
\frac{1}{2};3;x\right) +\left( 3x+13\right) \sqrt{1-x}-16\left( x+7\right) .
\end{equation*}%
Expanding in power series yields%
\begin{eqnarray*}
h\left( x\right) &=&\frac{9}{32}\left( x+15\right) ^{2}\sum_{n=0}^{\infty }%
\frac{W_{n}^{2}}{\left( n+1\right) \left( n+2\right) }x^{n} \\
&&-\left( 3x+13\right) \sum_{n=0}^{\infty }\frac{W_{n}}{2n-1}x^{n}-16\left(
x+7\right)
\end{eqnarray*}%
\begin{eqnarray*}
&=&\left( \frac{2025}{64}\pi -99\right) +\left( \frac{1755}{256}\pi -\frac{39%
}{2}\right) x \\
&&+\sum_{n=2}^{\infty }\left( \dfrac{9}{32}\dfrac{4096n^{4}-14\,848n^{3}+17%
\,984n^{2}-8672n+2025}{\left( 2n-1\right) ^{2}\left( 2n-3\right) ^{2}\left(
n+1\right) \left( n+2\right) }W_{n}^{2}\right.
\end{eqnarray*}%
\begin{equation*}
\left. -\dfrac{32n-39}{\left( 2n-1\right) \left( 2n-3\right) }W_{n}\right)
x^{n}:=\sum_{n=0}^{\infty }c_{n}x^{n},
\end{equation*}%
where%
\begin{equation*}
c_{0}=\left( \frac{2025}{64}\pi -99\right) >0\text{, \ }c_{1}=\left( \frac{%
1755}{256}\pi -\frac{39}{2}\right) >0\text{,}
\end{equation*}%
and for $n\geq 2$,%
\begin{equation*}
c_{n}=\frac{\left( 32n-39\right) W_{n}}{\left( 2n-1\right) \left(
2n-3\right) }\left( d_{n}-1\right) ,
\end{equation*}%
here%
\begin{equation*}
d_{n}=\frac{9\sqrt{\pi }}{32}\frac{4096n^{4}-14\,848n^{3}+17%
\,984n^{2}-8672n+2025}{\left( 2n-1\right) \left( 2n-3\right) \left(
n+1\right) \left( n+2\right) \left( 32n-39\right) }\frac{\Gamma \left(
n+1/2\right) }{\Gamma \left( n+1\right) }.
\end{equation*}%
Clearly, in view of $\lim_{n\rightarrow \infty }\left( d_{n}-1\right) =-1$,
we see that all coefficients $c_{n}$ do not keep the same sign. However, if
we prove that there is a $n_{0}\geq 2$ such that $\left( d_{n}-1\right) \geq
0$ for $2\leq n\leq n_{0}$ and $\left( d_{n}-1\right) <0$ for $n>n_{0}$, and 
$h\left( 1^{-}\right) \geq 0$, then by Lemma \ref{L-sgnS} we obtain that $%
h\left( x\right) \geq 0$ for all $x\in \left( 0,1\right) $, and so is $%
D^{\prime \prime }\left( x\right) $. Now, since the numerator of the second
member of the expression of $d_{n}$ is positive for $n\geq 2$, since it can
be written as%
\begin{equation*}
16n^{2}\left( 16n-29\right) ^{2}+\left( 4528n-8672\right) n+2025>0,
\end{equation*}%
we easily check that%
\begin{equation*}
\frac{d_{n+1}}{d_{n}}-1=-\dfrac{196\,608\times P_{5}\left( n\right) }{\left(
32n-7\right) \left( n+3\right) \left(
4096n^{4}-14\,848n^{3}+17\,984n^{2}-8672n+2025\right) }<0,
\end{equation*}%
because%
\begin{eqnarray*}
P_{5}\left( n\right) &=&n^{5}-\frac{427}{96}n^{4}+\frac{1823}{256}n^{3}-%
\frac{33\,203}{6144}n^{2}+\frac{4831}{2048}n-\frac{51\,165}{131\,072} \\
&=&\left( n-2\right) ^{5}+\frac{533}{96}\left( n-2\right) ^{4}+\frac{8861}{%
768}\left( n-2\right) ^{3} \\
&&+\frac{64\,957}{6144}\left( n-2\right) ^{2}+\frac{23\,729}{6144}\left(
n-2\right) +\frac{67\,235}{131\,072}
\end{eqnarray*}%
is positive for $n\geq 2$.

On the other hand, it is easy to verify that%
\begin{eqnarray*}
h\left( x\right) &=&\frac{9\pi }{64}\left( x+15\right) ^{2}F\left( \frac{1}{2%
},\frac{1}{2};3;x\right) +\left( 3x+13\right) \sqrt{1-x}-16\left( x+7\right)
\\
&\rightarrow &\frac{9\pi }{64}\left( 1+15\right) ^{2}\frac{32}{9\pi }%
-16\left( 1+7\right) =0\text{ as }x\rightarrow 1^{-}.
\end{eqnarray*}%
We thus complete the proof.
\end{proof}

Applying the properties of the convex functions to $D\left( x\right) $, we
can obtain the following results.

\begin{corollary}
We have%
\begin{equation*}
\frac{\mathcal{K}\left( r\right) +\mathcal{K}\left( r^{\prime }\right) }{2}>%
\frac{\ln \left( 1+4/r^{\prime }\right) +\ln \left( 1+4/r\right) }{2}+%
\mathcal{K}\left( \frac{1}{\sqrt{2}}\right) -\ln \left( 1+4\sqrt{2}\right)
\end{equation*}%
holds for $r\in \left( 0,1\right) $.
\end{corollary}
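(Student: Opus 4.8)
The plan is to derive the inequality directly from the strict convexity of $D$ established in Theorem \ref{MT-3}, using nothing more than the two-point Jensen inequality applied to the reflection pair $x$ and $1-x$. Since $D$ is strictly convex on $\left( 0,1\right) $, for every $x\in \left( 0,1\right) $ we have
\begin{equation*}
\frac{D\left( x\right) +D\left( 1-x\right) }{2}\geq D\left( \frac{x+\left( 1-x\right) }{2}\right) =D\left( \tfrac{1}{2}\right) ,
\end{equation*}
with equality precisely when $x=1-x$. This is the only structural input required.

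Next I would set $x=r^{2}$, so that $1-x=\left( r^{\prime }\right) ^{2}$, and unfold the definition $D\left( x\right) =\mathcal{K}\left( \sqrt{x}\right) -\ln \left( 1+4/\sqrt{1-x}\right) $ at the three relevant arguments. This yields
\begin{align*}
D\left( r^{2}\right) &=\mathcal{K}\left( r\right) -\ln \left( 1+4/r^{\prime }\right) ,\\
D\left( \left( r^{\prime }\right) ^{2}\right) &=\mathcal{K}\left( r^{\prime }\right) -\ln \left( 1+4/r\right) ,\\
D\left( \tfrac{1}{2}\right) &=\mathcal{K}\left( \tfrac{1}{\sqrt{2}}\right) -\ln \left( 1+4\sqrt{2}\right) ,
\end{align*}
where I have used $\sqrt{1/2}=1/\sqrt{2}$ and $4/\sqrt{1/2}=4\sqrt{2}$ in the last line.

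Substituting these three expressions into the Jensen inequality above, and transposing the logarithmic average to the right-hand side, reproduces the claimed inequality verbatim. The argument carries no genuine analytic difficulty once Theorem \ref{MT-3} is in hand, so there is no real obstacle to overcome; the entire content has been loaded into the convexity statement. The one point worth flagging is that the two-point Jensen step is an \emph{equality} exactly at $x=1/2$, i.e. $r=1/\sqrt{2}$, so the strict inequality holds for every $r\in \left( 0,1\right) $ with $r\neq 1/\sqrt{2}$, the two sides coinciding at the single point $r=1/\sqrt{2}$.
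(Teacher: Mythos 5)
Your proof is correct and is essentially the paper's own argument: the paper derives this corollary directly from the strict convexity of $D$ in Theorem \ref{MT-3} via the two-point Jensen inequality at the reflection pair $x=r^{2}$, $1-x=\left( r^{\prime }\right) ^{2}$, exactly as you do. Your closing observation is a valid refinement of the statement itself: since the Jensen step is an equality at $r=1/\sqrt{2}$, the paper's strict ``$>$'' should properly be ``$\geq $'' (or the point $r=1/\sqrt{2}$ should be excluded), with strict inequality for all other $r\in \left( 0,1\right) $.
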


\begin{corollary}
Both the functions%
\begin{eqnarray*}
r &\mapsto &\frac{\mathcal{K}\left( r\right) -\ln \left( 1+4/r^{\prime
}\right) -\left( \ln 5-\pi /2\right) }{r^{2}}, \\
r &\mapsto &\frac{\mathcal{K}\left( r\right) -\ln \left( 1+4/r^{\prime
}\right) }{\left( r^{\prime }\right) ^{2}}
\end{eqnarray*}%
are strictly increasing and decreasing on $\left( 0,1\right) $,
respectively. And consequently, the double inequality%
\begin{equation}
\ln \left( 1+\frac{4}{r^{\prime }}\right) -\left( \ln 5-\frac{\pi }{2}%
\right) +pr^{2}<\mathcal{K}\left( r\right) <\ln \left( 1+\frac{4}{r^{\prime }%
}\right) -\left( \ln 5-\frac{\pi }{2}\right) +qr^{2}  \label{K<>}
\end{equation}%
holds for $r\in \left( 0,1\right) $ if and only if $p\leq p_{0}=\pi /8-2/5$
and $q\geq p_{1}=\ln 5-\pi /2$.
\end{corollary}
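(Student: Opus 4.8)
The plan is to recognize both displayed quotients as chord slopes of the function $D$ from Theorem~\ref{MT-3} and to exploit its strict convexity, the argument running parallel to the derivation of (\ref{Mi-3}) with convexity in place of concavity.

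First I would set $x=r^{2}$, so that $r^{\prime }=\sqrt{1-x}$ and $\mathcal{K}(r)-\ln(1+4/r^{\prime })=D(x)$, where $D(x)=\mathcal{K}(\sqrt{x})-\ln(1+4/\sqrt{1-x})$ is strictly convex on $(0,1)$ by Theorem~\ref{MT-3}. Writing $c=\ln 5-\pi /2$, a direct evaluation gives $D(0)=\pi /2-\ln 5=-c$, so (once we also know $D(1^{-})=0$) the two functions in the statement reduce to the chord slopes
\[
\frac{D(x)-D(0)}{x-0}\qquad\text{and}\qquad\frac{D(x)}{1-x}=-\frac{D(x)-D(1)}{x-1}
\]
of $D$. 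For a strictly convex function the chord slope $[D(x)-D(a)]/(x-a)$ from a fixed endpoint $a$ is strictly increasing in $x$; taking $a=0$ makes the first quotient strictly increasing, while taking $a=1$ makes $[D(x)-D(1)]/(x-1)$ strictly increasing, hence $D(x)/(1-x)$ strictly decreasing.

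Next I would pin down the boundary values. Two are immediate: $D(0)=\pi /2-\ln 5$, and the derivative recorded in the proof of Theorem~\ref{MT-3} gives
\[
D^{\prime }(0)=\frac{\pi }{8}F\!\left(\tfrac{3}{2},\tfrac{3}{2};2;0\right)-\frac{2(4-1)}{15\cdot 1}=\frac{\pi }{8}-\frac{2}{5}=p_{0}.
\]
The crucial input is $D(1^{-})=0$, for which I would invoke Lemma~\ref{L-af-1,2,3}: with $t=1-x=(r^{\prime })^{2}$ one has $\mathcal{K}(r)=\tfrac{\pi }{2}F(\tfrac{1}{2},\tfrac{1}{2};1;x)=\ln(4/r^{\prime })+O(t\ln t)$, while $\ln(1+4/r^{\prime })=\ln(4/r^{\prime })+r^{\prime }/4+O((r^{\prime })^{2})$, so that $D(x)=-r^{\prime }/4+O((r^{\prime })^{2}\ln r^{\prime })\to 0$. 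Hence the first quotient increases from $p_{0}$ to $D(1)-D(0)=\ln 5-\pi /2=p_{1}$, while the second satisfies $D(x)/(1-x)\to D(0)=-p_{1}$ as $x\to 0^{+}$ and $D(x)/(1-x)=-1/(4r^{\prime })+O(\ln r^{\prime })\to-\infty$ as $x\to 1^{-}$, which confirms both monotonicities.

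Finally I would read off the sharp double inequality. Dividing (\ref{K<>}) by $r^{2}=x>0$ and using $D(0)=-c$ turns it into $p<[D(x)-D(0)]/x<q$ for all $x\in(0,1)$. Since the first quotient is strictly increasing with range exactly $(p_{0},p_{1})$, this holds for every $x$ if and only if $p\le p_{0}$ and $q\ge p_{1}$, the endpoint values being approached but never attained (whence the strict inequalities and the sharpness of the constants). The main obstacle is the single boundary computation $D(1^{-})=0$: the monotonicities come for free from convexity, but identifying the supremum $p_{1}=\ln 5-\pi /2$ of the first quotient requires the first-order cancellation between $\mathcal{K}(r)$ and $\ln(1+4/r^{\prime })$ furnished by Lemma~\ref{L-af-1,2,3}, since the cruder asymptotic $\mathcal{K}(r)\sim\ln(4/r^{\prime })$ by itself only yields $D(x)=o(\ln(1/r^{\prime }))$ rather than $D(x)\to 0$.
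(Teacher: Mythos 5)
Your proposal is correct and is essentially the paper's own (largely implicit) argument: the corollary is stated there as an immediate application of the strict convexity of $D$ from Theorem \ref{MT-3}, via exactly the chord-slope monotonicity you use, with the endpoint data $D(0)=\pi/2-\ln 5$, $D^{\prime}(0)=\pi/8-2/5$, and $D(1^{-})=0$ (the last via Lemma \ref{L-af-1,2,3}, just as you compute) pinning down the sharp constants $p_{0}$ and $p_{1}$. One small point worth noting: the first displayed quotient in the statement carries a sign typo --- its numerator should read $\mathcal{K}(r)-\ln(1+4/r^{\prime})+(\ln 5-\pi/2)$ so that it equals the chord slope $\left[D(x)-D(0)\right]/x$ consistent with inequality (\ref{K<>}) --- and you silently and correctly repaired it.
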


\begin{remark}
When $q=p_{1}=\ln 5-\pi /2$, the right hand side inequality of (\ref{K<>})
can be written as%
\begin{equation*}
\mathcal{K}\left( r\right) <\ln \left( 1+\frac{4}{r^{\prime }}\right)
-\left( \ln 5-\frac{\pi }{2}\right) \left( 1-r^{2}\right) <\ln \left( 1+%
\frac{4}{r^{\prime }}\right) -\left( \ln 5-\frac{\pi }{2}\right) \left(
1-r\right) ,
\end{equation*}%
which gives another proof of the conjecture posed in \cite%
{Anderson-SIAM-JMA-23-1992}, or inequality (\ref{K<AVV}). Moreover,
inequality (\ref{K<>}) gives a companion one.
\end{remark}

\begin{remark}
Due to $D^{\prime }\left( 0\right) =\pi /8-2/5<0$, $D^{\prime }\left(
1^{-}\right) =\infty $, there is a $x_{0}\in \left( 0,1\right) $ such that $%
D^{\prime }\left( x\right) <0$ for $x\in \left( 0,x_{0}\right) $ and $%
D^{\prime }\left( x\right) >0$ for $x\in \left( x_{0},1\right) $.
\end{remark}

\section{Concluding remarks}

In this paper, we investigate the convexity of two functions $Q_{1}\left(
x\right) =\mathcal{K}\left( \sqrt{x}\right) $/$\ln \left( c/\sqrt{1-x}%
\right) $ and $D\left( x\right) =\mathcal{K}\left( \sqrt{x}\right) -\ln
\left( 1+4/\sqrt{1-x}\right) $, and monotonicity of $Q_{2}\left( r\right) =%
\mathcal{K}\left( r\right) /\ln \left( 1+4/r^{\prime }\right) $. It is known
that $Q_{1}$ is decreasing (increasing) on $\left( 0,1\right) $ if and only
if $c\in \left[ 1,4\right] $ ($[e^{2},\infty )$), but $Q_{1}$ is concave on $%
\left( 0,1\right) $ if and only if $c=e^{4/3}$, which is an interesting
result. We also show that $Q_{2}$ and $D$ are strictly increasing and convex
on $\left( 0,1\right) $, respectively. These results give some new sharp
inequalities for $\mathcal{K}\left( r\right) $, $\mathcal{K}\left( r^{\prime
}\right) $ and their combinations $\mu \left( r\right) $. Moreover, it
should be noted that these functions mentioned above are simple but it is
somewhat difficult to prove their properties, and fortunately, we have
Lemmas \ref{L-A/B-pm} and \ref{L-sgnS} as powerful tools.

Finally, we close this paper by posing the following problem and conjectures.

\begin{problem}
Determine the best parameter $c$ such that $Q_{1}\left( x\right) =\mathcal{K}%
\left( \sqrt{x}\right) /\ln \left( c/\sqrt{1-x}\right) $ is convex on $%
\left( 0,1\right) $, or $1/Q_{1}$ is concave on $\left( 0,1\right) $.
\end{problem}

\begin{conjecture}
The function $x\mapsto \left( 1-x\right) ^{p}\mathcal{K}\left( \sqrt{x}%
\right) $ is log-concave on $\left( 0,1\right) $ if and only if $p\geq 7/32$.
\end{conjecture}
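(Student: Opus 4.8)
The plan is to reduce the log-concavity to a one-variable pointwise bound and then settle that bound by the same power-series technique used for Theorem~\ref{MT-1}. Since $(1-x)^p\mathcal{K}(\sqrt{x})>0$, I set $L(x)=\ln\mathcal{K}(\sqrt{x})$ and $\phi(x)=p\ln(1-x)+L(x)$; log-concavity of $(1-x)^p\mathcal{K}(\sqrt{x})$ is exactly $\phi''\le0$ on $(0,1)$, that is, $G(x):=(1-x)^2L''(x)\le p$. To make $G$ computable I would write $f(x)=F(\tfrac12,\tfrac12;1;x)$ (so $L=\ln(\pi/2)+\ln f$ and $L''=(f''f-(f')^2)/f^2$) and use (\ref{df}) together with the Euler transformation, the third line of (\ref{F-near1}), to get $(1-x)f'=\tfrac14F(\tfrac12,\tfrac12;2;x)$ and $(1-x)^2f''=\tfrac{9}{32}F(\tfrac12,\tfrac12;3;x)$. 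Abbreviating $F_j=F(\tfrac12,\tfrac12;j;x)$, this gives
\[
G(x)=\frac{(1-x)^2\bigl(f''f-(f')^2\bigr)}{f^2}=\frac{\tfrac{9}{32}F_1F_3-\tfrac1{16}F_2^{2}}{F_1^{2}}.
\]
As $F_1(0)=F_2(0)=F_3(0)=1$ we read off $G(0^+)=\tfrac{9}{32}-\tfrac1{16}=\tfrac{7}{32}$, and since $\phi''(x)=(G(x)-p)/(1-x)^2$ this settles necessity at once: if $p<7/32$ then $\phi''>0$ near $0$, so the function is log-convex there and cannot be log-concave on $(0,1)$.

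For sufficiency it is enough to show $G(x)\le7/32$ on $(0,1)$, equivalently
\[
\Phi(x):=\tfrac{7}{32}F_1^{2}+\tfrac1{16}F_2^{2}-\tfrac{9}{32}F_1F_3\ge0 .
\]
I would expand using $F_1=\sum_{n\ge0}W_n^2x^n$, $F_2=\sum_{n\ge0}\tfrac{W_n^2}{n+1}x^n$ and $F_3=\sum_{n\ge0}\tfrac{2W_n^2}{(n+1)(n+2)}x^n$, with $W_n$ as in (\ref{Wn}), multiply out the three Cauchy products, and exploit the symmetry of $W_k^2W_{n-k}^2$ under $k\leftrightarrow n-k$ to symmetrize the (otherwise unsymmetric) contribution of $F_1F_3$. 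This writes $\Phi(x)=\sum_{n\ge0}\Phi_nx^n$ with $\Phi_n=\sum_{k=0}^{n}W_k^2W_{n-k}^2\,B(k,n-k)$, where, with $m=n-k$,
\[
32\,B(k,m)=7+\frac{2}{(k+1)(m+1)}-9\left(\frac{1}{(k+1)(k+2)}+\frac{1}{(m+1)(m+2)}\right).
\]

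The crux is the elementary claim that $B(k,m)\ge0$ for all integers $k,m\ge0$, with equality only at $k=m=0$. Putting $a=k+1,\ b=m+1\ge1$, it reads $7+\tfrac{2}{ab}\ge9\bigl(\tfrac1{a(a+1)}+\tfrac1{b(b+1)}\bigr)$, which I would dispatch by cases: for $a=b=1$ both sides equal $9$; if exactly one of $a,b$ equals $1$, the bound $\tfrac1{a(a+1)}\le\tfrac16$ for $a\ge2$ makes the right side at most $9(\tfrac12+\tfrac16)=6<7$; and if $a,b\ge2$ it is at most $9(\tfrac16+\tfrac16)=3$. Granting this, every $\Phi_n$ is a sum of nonnegative terms with $\Phi_0=0$ (reflecting $G(0^+)=7/32$) and, for instance, $\Phi_1=\tfrac1{32}>0$, so $\Phi(x)>0$ on $(0,1)$; hence $G(x)<7/32\le p$, $\phi''<0$, and $(1-x)^p\mathcal{K}(\sqrt{x})$ is strictly log-concave for every $p\ge7/32$. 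The main obstacle I anticipate is not depth but bookkeeping: one must carry out the symmetrization correctly, since the unsymmetrized brackets can be negative (already at $(k,m)=(1,0)$), and track the Wallis-ratio coefficients accurately. Pleasantly, once $B\ge0$ is established no appeal to Lemma~\ref{L-sgnS} is even needed, because all coefficients of $\Phi$ are then manifestly nonnegative.
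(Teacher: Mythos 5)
This statement is posed in the paper as an open conjecture in the concluding remarks --- the paper contains no proof of it, so there is no argument of the authors' to compare yours against. The closest the paper comes is Corollary \ref{C-K-lc}, which obtains log-concavity only for the non-sharp exponent $p=1/4$, and only indirectly, by factoring $\left( 1-x\right) ^{1/4}\mathcal{K}\left( \sqrt{x}\right) $ into two log-concave functions via the full strength of Theorem \ref{MT-1}. Your proposal, by contrast, attacks the sharp threshold directly, and after checking it step by step I find it correct: the reduction of log-concavity to $\left( 1-x\right) ^{2}\left( \ln F_{1}\right) ^{\prime \prime }\leq p$ is sound; the identities $\left( 1-x\right) F_{1}^{\prime }=\tfrac{1}{4}F_{2}$ and $\left( 1-x\right) ^{2}F_{1}^{\prime \prime }=\tfrac{9}{32}F_{3}$ follow from (\ref{df}) and the Euler transformation in (\ref{F-near1}) exactly as claimed (both uses satisfy $c<a+b$); the coefficient formulas $F_{2}=\sum_{n\geq 0}\tfrac{W_{n}^{2}}{n+1}x^{n}$ and $F_{3}=\sum_{n\geq 0}\tfrac{2W_{n}^{2}}{\left( n+1\right) \left( n+2\right) }x^{n}$ with $W_{n}$ as in (\ref{Wn}) are right; the symmetrization of the $F_{1}F_{3}$ Cauchy product is legitimate because $W_{k}^{2}W_{n-k}^{2}$ is symmetric in $k\leftrightarrow n-k$; and the key elementary inequality
\begin{equation*}
7+\frac{2}{ab}\geq 9\left( \frac{1}{a\left( a+1\right) }+\frac{1}{b\left(
b+1\right) }\right) ,\qquad a,b\geq 1,
\end{equation*}
holds with equality only at $a=b=1$ by your three-case check, giving $\Phi _{0}=0$, $\Phi _{n}>0$ for $n\geq 1$ (I confirmed $\Phi _{1}=1/32$, and numerically $G\left( 1/2\right) \approx 0.198<7/32$, consistent with $G\left( x\right) \rightarrow 0$ as $x\rightarrow 1^{-}$). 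Necessity via $G\left( 0^{+}\right) =7/32$ is also airtight, since $G$ is analytic at $0$, so $p<7/32$ forces $\phi ^{\prime \prime }>0$ on a neighborhood of $0$. What your route buys beyond the paper's partial result is substantial: it resolves the conjecture with the sharp constant $7/32$, it is entirely elementary, and --- as you observe --- it bypasses both Lemma \ref{L-A/B-pm} and Lemma \ref{L-sgnS}, because after symmetrization \emph{all} Cauchy coefficients are nonnegative, a cleaner situation than the piecewise-sign patterns those lemmas were designed to handle. The one presentational caveat: when writing this up, make the symmetrization step fully explicit, since, as you note, the unsymmetrized brackets are already negative at $\left( k,m\right) =\left( 1,0\right) $, and a reader checking coefficients termwise without symmetrizing would wrongly conclude the method fails.
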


\begin{conjecture}
The function $Q_{2}\left( x\right) =\mathcal{K}\left( \sqrt{x}\right) /\ln
\left( 1+4/\sqrt{1-x}\right) $ is convex on $\left( 0,1\right) $, or $%
1/Q_{2} $ is concave on $\left( 0,1\right) $.
\end{conjecture}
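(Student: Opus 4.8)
The plan is to follow the iterated scheme that succeeded for the concavity of $Q_1$ in Theorem \ref{MT-1}, now arranged so as to produce a \emph{positive} second derivative. Writing $f(x)=F\left(\tfrac12,\tfrac12;1;x\right)$ and $g(x)=\ln\left(1+4/\sqrt{1-x}\right)$, so that $Q_2=\tfrac{\pi}{2}f/g$, I would first reuse the derivatives already computed in the proof of Theorem \ref{MT-2}, namely $f'=\tfrac14F\left(\tfrac32,\tfrac32;2;x\right)$ and $g'=2(4-\sqrt{1-x})/[(15+x)(1-x)]$, and record by (\ref{d-f/g}) that
\begin{equation*}
\frac{2}{\pi}Q_2'=\left(\frac fg\right)'=\frac{g'}{g^2}\,H_{f,g},\qquad H_{f,g}=\frac{f'}{g'}g-f.
\end{equation*}
Since $g,g'>0$ on $(0,1)$, convexity of $Q_2$ is equivalent to the monotone increase of $(f/g)'$; so I recast $(f/g)'$ as a fresh quotient and differentiate once more.

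The second step is the nested differentiation that drove Theorem \ref{MT-1}. Setting $f_1=f'g-fg'$ and $g_1=g^2$, so that $\tfrac{2}{\pi}Q_2'=f_1/g_1$ with $g_1'=2gg'>0$, the sign of $Q_2''$ agrees by (\ref{d-f/g}) with that of $H_{f_1,g_1}=(f_1'/g_1')g_1-f_1$; and by (\ref{dHf,g}) the monotonicity of $H_{f_1,g_1}$ is governed by $(f_1'/g_1')'$, that is, by a further auxiliary function $H_{f_2,g_2}$ with $f_2/g_2=f_1'/g_1'$. As in the proof of Theorem \ref{MT-1}, where the factor $g_2/g_2'$ was replaced by the explicit bound $-\tfrac85(1-x)$, I expect to bound $g_2/g_2'$ below by an elementary expression and thereby reduce the sign of $H_{f_2,g_2}$ to the positivity of a single power series. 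After expansion via (\ref{df}), the Euler reduction in the third line of (\ref{F-near1}), and the Wallis ratios $W_n$ of (\ref{Wn}), that series should have coefficients with exactly one sign change, so that Lemma \ref{L-sgnS} applies once the limit at $x\to1^-$ is verified; the boundary data I would compute are $H_{f,g}(0^+)$, $H_{f,g}(1^-)$ and the analogous limits at the inner levels, using the asymptotics of Lemma \ref{L-af-1,2,3}.

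A potentially cleaner route, worth pursuing in parallel, exploits Theorem \ref{MT-3}: since $\tfrac{\pi}{2}f=g+D$ with $D$ the convex difference of (\ref{D}), one has the identity $Q_2=1+D/g$, whence $Q_2''=(D/g)''$ and the problem becomes the convexity of $D/g$, where $D<0$ is convex and $g>0$ is convex (indeed $g''>0$, as recorded in the proof of Theorem \ref{MT-3}). Clearing denominators, this asks for positivity of $D''g^2-Dgg''-2D'gg'+2Dg'^2$ on $(0,1)$; with the leading logarithmic singularity of $\mathcal{K}$ having cancelled in $D$, the coefficients attached to $D,D',D''$ should be more tractable than those of $f$ itself, which may lighten the final appeal to Lemma \ref{L-sgnS}.

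The main obstacle, in either route, is the square root $\sqrt{1-x}$ carried by $g'$. In Theorem \ref{MT-1} the choice $c=e^{4/3}$ turned $\ln(c/\sqrt{1-x})$ into the purely logarithmic $\tfrac43-\tfrac12\ln(1-x)$, whose derivative is rational, and this is exactly what let every intermediate quotient collapse to a clean ratio of hypergeometric series. Here $g'=2(4-\sqrt{1-x})/[(15+x)(1-x)]$ is genuinely algebraic, so two or three derivative-and-$H$ iterations mix $\sqrt{1-x}$, the rational factor $1/(15+x)$, and the logarithm $g$; expanding $\sqrt{1-x}$ through the $W_n$ (as in the proof of Theorem \ref{MT-3}) keeps everything in series form but makes the coefficient bookkeeping heavy, and securing the single sign change demanded by Lemma \ref{L-sgnS} at the final level is the delicate point. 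This is presumably why the statement was left as a conjecture rather than proved as a theorem.
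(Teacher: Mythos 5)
You are addressing a statement that the paper itself leaves open: it appears only as a conjecture in the concluding remarks, and the paper contains no proof of it, so there is no argument of the authors' for your proposal to match. Judged on its own merits, what you have written is a research plan rather than a proof, and the gap is that every decisive step is deferred. You never exhibit the elementary lower bound for $g_{2}/g_{2}^{\prime }$ that would play the role of $-\tfrac{8}{5}\left( 1-x\right) $ in the proof of Theorem \ref{MT-1}; you never carry out the series expansion or compute a single coefficient; and the pivotal claim that the final series ``should have coefficients with exactly one sign change'' is a hope, not a fact --- Lemma \ref{L-sgnS} cannot be invoked until that sign pattern and the boundary value as $x\rightarrow 1^{-}$ are actually established. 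The scaffolding you do state is correct (the derivatives $f^{\prime }=\tfrac{1}{4}F\left( \tfrac{3}{2},\tfrac{3}{2};2;x\right) $ and $g^{\prime }=2\left( 4-\sqrt{1-x}\right) /\left[ \left( 15+x\right) \left( 1-x\right) \right] $ taken from the proof of Theorem \ref{MT-2}, the identities (\ref{d-f/g}) and (\ref{dHf,g}), the decomposition $Q_{2}=1+D/g$, and the numerator $D^{\prime \prime }g^{2}-Dgg^{\prime \prime }-2D^{\prime }gg^{\prime }+2D\left( g^{\prime }\right) ^{2}$), but correct scaffolding does not close an argument whose hard analytic core is untouched.

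Your second route is also not as favorable as you suggest: convexity of $D$ together with convexity of $g$ does not imply convexity of $D/g$, and the four terms of your numerator have genuinely mixed signs. With $D<0$ on $\left( 0,1\right) $ (by Theorem \ref{MT-2}), $D^{\prime \prime }>0$ (Theorem \ref{MT-3}) and $g,g^{\prime },g^{\prime \prime }>0$, the terms $D^{\prime \prime }g^{2}$ and $-Dgg^{\prime \prime }$ are positive, but $2D\left( g^{\prime }\right) ^{2}$ is negative throughout, and $-2D^{\prime }gg^{\prime }$ changes sign at the interior point $x_{0}$ where $D^{\prime }$ vanishes, since the paper's remark after Theorem \ref{MT-3} records $D^{\prime }\left( 0\right) =\pi /8-2/5<0$ while $D^{\prime }\left( 1^{-}\right) =\infty $; near $x=1$ one must additionally control competing logarithmic singularities via Lemma \ref{L-af-1,2,3}. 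Both of your routes therefore terminate exactly where the real difficulty begins, as your own closing sentence concedes. The sketch is a reasonable program, consistent in spirit with the machinery of Theorems \ref{MT-1}--\ref{MT-3}, but it does not constitute a proof, and the conjecture remains open.
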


\begin{conjecture}
Let $D\left( x\right) =\mathcal{K}\left( \sqrt{x}\right) -\ln \left( 1+4/%
\sqrt{1-x}\right) $. Then $D^{\prime \prime }$ is absolute monotonic on $%
\left( 0,1\right) $.
\end{conjecture}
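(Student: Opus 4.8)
The plan is to reduce the absolute monotonicity of $D''$ to the nonnegativity of all its Maclaurin coefficients, and then to prove that nonnegativity by a coefficient-by-coefficient comparison. A function analytic at $0$ is absolutely monotonic on $\left( 0,1\right) $ precisely when every coefficient in its Taylor expansion about $0$ is nonnegative (the radius of convergence being $1$, dictated by the branch point of $\mathcal{K}(\sqrt{x})$ at $x=1$), so it suffices to write $D''(x)=\sum_{n\ge 0}e_{n}x^{n}$ and prove $e_{n}\ge 0$ for all $n$. First I would split $D''=[\mathcal{K}(\sqrt{x})]''-g''$, where $g(x)=\ln \left( 1+4/\sqrt{1-x}\right) $. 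The first piece is transparent: from $\mathcal{K}(\sqrt{x})=\tfrac{\pi }{2}\sum_{k\ge 0}W_{k}^{2}x^{k}$ one gets $[\mathcal{K}(\sqrt{x})]''=\tfrac{\pi }{2}\sum_{n\ge 0}(n+1)(n+2)W_{n+2}^{2}x^{n}$, with manifestly positive coefficients. For the second piece I would rationalize as in the proof of Theorem \ref{MT-3} and separate the rational and irrational parts, obtaining the closed form
\[
g''(x)=\frac{1}{2(1-x)^{2}}-\frac{1}{2(15+x)^{2}}-\frac{3x+13}{(15+x)^{2}(1-x)^{3/2}}.
\]

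Extracting the coefficient of $x^{n}$ then gives an explicit formula for $e_{n}$. The term $1/[2(1-x)^{2}]$ contributes $-\tfrac{1}{2}(n+1)$; the term $-1/[2(15+x)^{2}]$ contributes $+\tfrac{1}{2}(-1)^{n}(n+1)/15^{n+2}$, which is exponentially small; and the irrational term contributes $+\sum_{m=0}^{n}\rho _{m}V_{n-m}$, where $V_{k}=(3/2)_{k}/k!$ are the positive, increasing coefficients of $(1-x)^{-3/2}$ and the $\rho _{m}$ are the coefficients of $(3x+13)/(15+x)^{2}$, with $\rho _{0}=13/225$ and, for $m\ge 1$, $\rho _{m}=(-1)^{m+1}(32m-13)/15^{m+2}$, so that $\sum_{m\ge 1}|\rho _{m}|$ is a small explicit number well below $\rho _{0}$. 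Altogether
\[
e_{n}=\frac{\pi }{2}(n+1)(n+2)W_{n+2}^{2}-\frac{n+1}{2}+\frac{(-1)^{n}(n+1)}{2\cdot 15^{n+2}}+\sum_{m=0}^{n}\rho _{m}V_{n-m},
\]
and the whole problem becomes the inequality $e_{n}\ge 0$ for every $n\ge 0$.

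The decisive observation is a near-cancellation: writing $\tfrac{\pi }{2}(n+1)(n+2)W_{n+2}^{2}-\tfrac{n+1}{2}=\tfrac{n+1}{2}\bigl( \pi (n+2)W_{n+2}^{2}-1\bigr) $ and using $\pi mW_{m}^{2}=1-\tfrac{1}{4m}+O(m^{-2})$, the two $\Theta (n)$ contributions cancel and this combination stays bounded (in fact it tends to $-1/8$). Meanwhile the irrational contribution is of exact order $\Theta (\sqrt{n})$ and positive, since $V_{n-m}\le V_{n}$ gives $\sum_{m=0}^{n}\rho _{m}V_{n-m}\ge \bigl( \rho _{0}-\sum_{m\ge 1}|\rho _{m}|\bigr) V_{n}=\kappa _{0}V_{n}$ with $\kappa _{0}>0$ an explicit constant (numerically $\kappa _{0}\approx 0.05$). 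Thus $e_{n}\sim \tfrac{13}{225}V_{n}\to +\infty $, and for large $n$ the positive $\Theta (\sqrt{n})$ term dominates both the bounded residual and the exponentially small term. To turn this into a proof valid for all $n$, I would (i) verify $e_{n}\ge 0$ for small $n$ by direct computation; (ii) for the remaining range, replace the asymptotics of $W_{n}$ by explicit two-sided Wallis-ratio bounds to control $\bigl| \tfrac{n+1}{2}(\pi (n+2)W_{n+2}^{2}-1)\bigr| $ by an absolute constant, bound the convolution tail $\sum_{m\ge 1}|\rho _{m}|V_{n-m}$ using $V_{n-m}\le V_{n}$ and the geometric decay of $\rho _{m}$, and finally compare against the lower bound $\kappa _{0}V_{n}$ to conclude positivity beyond an explicit index $N_{0}$.

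The hard part will be exactly this threshold comparison. Because the leading $\Theta (n)$ terms of $[\mathcal{K}(\sqrt{x})]''$ and of $g''$ cancel identically, the sign of $e_{n}$ is decided only at the $\Theta (\sqrt{n})$ and $O(1)$ levels, so one needs genuinely second-order, and \emph{uniform in} $n$ (not merely asymptotic), control of the Wallis ratios $W_{n}$ together with a matching control of the convolution $\sum_{m}\rho _{m}V_{n-m}$; pinning down the finitely many intermediate indices where the bounded residual and the slowly growing $V_{n}$ are comparable is where the work concentrates. I note that Lemmas \ref{L-A/B-pm} and \ref{L-sgnS} govern the monotonicity of ratios and the sign of a function, respectively, but neither yields nonnegativity of \emph{all} Taylor coefficients directly. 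An equivalent route I would keep in reserve is to use the identity $(15+x)^{2}(1-x)^{2}D''=h$ from the proof of Theorem \ref{MT-3} to convert the claim into the recurrence $225e_{n}=c_{n}+420e_{n-1}-166e_{n-2}-28e_{n-3}-e_{n-4}$ and to prove $e_{n}\ge 0$ by induction, exploiting that $420=225+166+28+1$ reduces the inductive step to controlling $225e_{n-1}$ against $|c_{n}|$; this works once $\{e_{n}\}$ is known to be nondecreasing, and establishing that monotonicity is itself the delicate point.
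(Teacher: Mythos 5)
You should first be aware that the paper does \emph{not} prove this statement: it is posed as an open conjecture in the concluding section, and the closest result actually proved is Theorem \ref{MT-3}, which only gives $D''>0$ via the function $h(x)=(x+15)^{2}(1-x)^{2}D''(x)$. So there is no proof of record to compare against; your proposal, if completed, would settle the paper's conjecture (and re-prove Theorem \ref{MT-3} as a byproduct). On the mathematics: your skeleton checks out. The reduction of absolute monotonicity on $(0,1)$ to $e_{n}\geq 0$ is valid since $D''$ is analytic on the unit disk; the partial-fraction decomposition of $g''$ is correct (the simple-pole residues at $x=1$ and $x=-15$ vanish, leaving exactly $\tfrac{1}{2}(1-x)^{-2}-\tfrac{1}{2}(15+x)^{-2}$ plus the irrational term, consistent with the paper's $h$); your $\rho_{m}$ are right, and $\kappa_{0}=\rho_{0}-\sum_{m\geq 1}\lvert \rho_{m}\rvert =\tfrac{13}{225}-\tfrac{298}{44100}=\tfrac{5}{98}$. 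The near-cancellation is also real: with the classical Kazarinoff bounds $\pi (n+\tfrac12)W_{n}^{2}<1<\pi (n+\tfrac12)W_{n}^{2}\cdot \tfrac{n+1/2}{n+1/4}$ one gets $\tfrac{n+1}{2}\bigl(\pi (n+2)W_{n+2}^{2}-1\bigr)\in (-\tfrac14,0)$ uniformly, so $e_{n}>0$ as soon as $\tfrac{5}{98}V_{n}>\tfrac14+\tfrac{n+1}{2\cdot 15^{n+2}}$, i.e.\ for $n$ beyond roughly $19$, leaving a finite check. That finite check is genuinely delicate at the bottom end --- $e_{0}=\tfrac{9\pi }{64}-\tfrac{11}{25}\approx 0.0018$ and $e_{1}\approx 0.012$ --- which confirms your own diagnosis that the constants must be tracked exactly, not asymptotically.

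Two caveats. First, a small slip: $e_{n}\sim \tfrac{1}{16}V_{n}$, not $\tfrac{13}{225}V_{n}$, because $V_{n-m}/V_{n}\rightarrow 1$ for each fixed $m$, so the full sum $\sum_{m\geq 0}\rho_{m}=\tfrac{3\cdot 1+13}{(15+1)^{2}}=\tfrac{1}{16}$ enters the leading term, not just $\rho_{0}$; this is harmless since your proof only uses the lower bound $\kappa_{0}V_{n}$. Second, as written this is a proposal rather than a proof: steps (i)--(ii) --- the explicit two-sided Wallis bounds, the threshold index $N_{0}$, and the finite verification --- are deferred. They are routine given the bounds above, so I see no step that would fail, but until they are executed the conjecture remains formally open; your reserve route via $225e_{n}=c_{n}+420e_{n-1}-166e_{n-2}-28e_{n-3}-e_{n-4}$ is correctly derived (the polynomial $(15+x)^{2}(1-x)^{2}=225-420x+166x^{2}+28x^{3}+x^{4}$ checks), but it hinges on monotonicity of $\{e_{n}\}$, which the direct route does not need, so I would treat the first route as primary.
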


\begin{remark}
As pointed out in \cite{Anderson-CA-14-1998}, the monotonicity of a function
normally yields constant bounds for this function, while the convexity or
concavity yield better estimates. If these problem and conjectures are
proved to be true, then it will yields some new better estimates for $%
\mathcal{K}\left( r\right) $, $\mathcal{K}\left( r^{\prime }\right) $ and
their combinations. For example, if $1/Q_{2}$ is concave on $\left(
0,1\right) $, then we have%
\begin{eqnarray*}
\sqrt{\frac{\ln \left( 1+4/r^{\prime }\right) }{\mathcal{K}\left( r\right) }%
\frac{\ln \left( 1+4/r\right) }{\mathcal{K}\left( r^{\prime }\right) }}
&\leq &\frac{1}{2}\left( \frac{\ln \left( 1+4/r^{\prime }\right) }{\mathcal{K%
}\left( r\right) }+\frac{\ln \left( 1+4/r\right) }{\mathcal{K}\left(
r^{\prime }\right) }\right) \leq \frac{\ln \left( 1+4\sqrt{2}\right) }{%
\mathcal{K}\left( 1/\sqrt{2}\right) }, \\
1-\frac{2\ln 5}{\pi } &<&\frac{1}{r^{2}}\left( \frac{\ln \left(
1+4/r^{\prime }\right) }{\mathcal{K}\left( r\right) }-\frac{2\ln 5}{\pi }%
\right) <\frac{2}{\pi }\left( \frac{2}{5}-\frac{1}{4}\ln 5\right) .
\end{eqnarray*}
\end{remark}

\section{Acknowledgements.}

The authors would like to express their sincere thanks to the anonymous
referees for their great efforts to improve this paper.


\begin{thebibliography}{99}
\bibitem{Byrd-HEIES-1971} P. F. Byrd and M. D. Friedman, Handbook of
Elliptic Integrals for Engineers and Scientists, Springer-Verlag, New York,
1971.

\bibitem{Hancock-EI-1958} H. Hancock, Elliptic Integrals, Dover
Publications, New York, 1958.

\bibitem{Carlson-SFAM-1977} B.C. Carlson, Special Functions of Applied
Mathematics, Academic Press, New York, 1977.

\bibitem{Borwein-PA-1987} J. M. Borwein and P.B. Borwein, Pi and the AGM,
Wiley-Interscience, New York, 1987.

\bibitem{Almkvist-AMM-95-1988} G. Almkvist and B. Berndt, Gauss, Landen,
Ramanujan, the arithmetic-geometric mean, ellipses, $\pi$, and the Ladies
Diary, Amer. Math. Monthly \textbf{95} (1988), 585--608.

\bibitem{Anderson-TMJ023-1971} G. D. Anderson and M. K. Vamanamurthy,
Rotation of plane quasiconformal mappings, T\^{o}hoku Math. J. \textbf{23}
(1971), 605--620.


\bibitem{Lehto-GMW-126-1973} O. Lehto and K. I. Virtanen, Quasiconformal
Mappings in the Plane, Grundlehren Math. Wissenschaften, vol. 126, 2nd ed.
Berlin: Springer-Verlag, 1973.

\bibitem{Abramowitz-HMFFGMT-NY-1965} M. Abramowitz and I. A. Stegun,
Handbook of Mathematical Functions with Formulas, Graphs and Mathematical
Tables, Dover, New York, 1965.

\bibitem{Borwein-SIAM-R-26-1984} J. M. Borwein and P. B. Borwein, The
arithmetic-geometric mean and fast computation of elementary functions, SIAM
Rev. \textbf{26} (1984), 351--366.

\bibitem{Carlson-SIAM-JMA-16-1985} B. C. Carlson and J. L. Guatafson,
Asymptotic expansion of the first elliptic integral, SIAM J. Math. Anal. 
\textbf{16} (1985), no. 5, 1072--1092.

\bibitem{Anderson-SIAM-JMA-21-1990} G. D. Anderson, M. K. Vamanamurthy, M.
Vuorinen, Functional inequalities for complete elliptic integrals and their
ratios, SIAM J. Math. Anal. \textbf{21} (1990), 536--549.

\bibitem{Anderson-SIAM-JMA-23-1992} G. D. Anderson, M. K. Vamanamurthy and
M. Vuorinen, Functional inequalities for hypergeometric functions and
complete elliptic integrals, SIAM J. Math. Anal. \textbf{23} (1992),
512--524.

\bibitem{Qiu-JHIEE-3-1993} S.-L. Qiu, The proof of a conjecture on the first
elliptic integrals, J. Hangzhou Inst. Elec. Engrg. \textbf{3} (1993), 29--36.

\bibitem{Kuhnau-ZAMM-74-1994} R. K\"{u}hnau, Eine Methode, die Positivit\"{a}%
t einer Funktion zu pr\"{u}fen, Z. Angew. Math. Mech. \textbf{74} (1994),
140--143 (in German).

\bibitem{Qiu-SIAM-JMA-27(3)-1996} S.-L. Qiu and M.K. Vamanamurthy, Sharp
estimates for complete elliptic integrals, SIAM J. Math. Anal. \textbf{27}
(1996), no. 3, 823--834.

\bibitem{Anderson-CA-14-1998} G. D. Anderson, S.-L. Qiu, and M. K.
Vamanamurthy, Elliptic integral inequalities, with applications, Constr.
Approx. \textbf{14} (1998), 195--207.

\bibitem{Alzer-MPCPS-124(2)-1998} H. Alzer, Sharp inequalities for the
complete elliptic integral of the first kind, Proc. Cambridge Philos. Soc. 
\textbf{124} (1998), no. 2, 309--314.

\bibitem{Qiu-SIAM-JMA-29-1998} S.-L. Qiu, M.K. Vamanamurthy, M. Vuorinen,
Some inequalities for the growth of elliptic integrals, SIAM J. Math. Anal.
29 (1998), 1224--1237.

\bibitem{Alzer-JCAM-172-2004} H. Alzer and S.-L. Qiu, Monotonicity theorems
and inequalities for the complete elliptic integrals, J. Comput. Appl. Math. 
\textbf{172} (2004), 289--312.

\bibitem{Alzer-AM-14-2015} H. Alzer and K. Richards, A note on a function
involving complete elliptic integrals: monotonicity, convexity,
inequalities, Anal. Math. \textbf{41} (2015), 133--139. 

\bibitem{Baricz-MZ-256(4)-2007} \'{A} Baricz, Tur\'{a}n type inequalities
for generalized complete elliptic integrals, Math. Z. \textbf{256} (2007),
no. 4, 895--911.

\bibitem{Heikkala-JMAA-338(1)-2008} V. Heikkala, H. Lind\'{e}n, M. K.
Vamanamurthy, M. Vuorinen, Generalized elliptic integrals and the Legendre
M-function, J. Math. Anal. Appl. \textbf{338} (2008), no. 1, 223--243.

\bibitem{Heikkala-CMFT-9(1)-2009} V. Heikkala, M. K. Vamanamurthy, M.
Vuorinen, Generalized elliptic integrals, Comput. Methods Funct. Theory 
\textbf{9} (2009), no. 1, 75--109.

\bibitem{Zhang-PRSES-A-139(2)-2009} X.-H. Zhang, G.-D. Wang, Y.-M. Chu,
Remark on generalized elliptic integrals, Proc. Roy. Soc. Edinburgh Sect. A 
\textbf{139} (2009), no. 2, 417--426.


\bibitem{Baricz-PAMS-142(9)-2014} \'{A}. Baricz, Landen inequalities for
special functions, Proc. Amer. Math. Soc. \textbf{142} (2014), no. 9,
3059--3066.

\bibitem{Wang-MIA-16(3)-2013} M.-K. Wang, Y.-M. Chu and S.-L. Qiu, Some
monotonicity properties of generalized ellipitic integrals with
applications, Math. Inequal. Appl. \textbf{16} (2013), no. 3, 671--677.

\bibitem{Wang-JMAA-429-2015} M.-K. Wang, Y.-M. Chu and S.-L. Qiu, Sharp
bounds for generalized elliptic integrals of the first kind, J. Math. Anal.
Appl. \textbf{429} (2015), 744--757.


\bibitem{Yang-arxiv-1409.6408} Zh.-H. Yang, A new way to prove L'Hospital
monotone rules with applications, arXiv:1409.6408 [math.CA].

\bibitem{Yang-JMAA-428-2015} Zh.-H. Yang, Y.-M. Chu and M.-K. Wang,
Monotonicity criterion for the quotient of power series with applications,
J. Math. Anal. Appl. \textbf{428} (2015), 587--604. 


\bibitem{Yang-JIA-40-2016} Zh.-H. Yang and Y.-M. Chu, On approximating the
modified Bessel function of the first kind and Toader-Qi mean, J. Inequal.
Appl. \textbf{2016} (2016): 40, doi: 10.1186/s13660-016-0988-1.

\bibitem{Wang-AMC-276-2016} M.-K. Wang, Y.-M. Chu, Y.-Q. Song, Asymptotical
formulas for Gaussian and generalized hypergeometric functions, Appl. Math.
Comput. \textbf{276} (2016), 44--60. 

\bibitem{Yang-MIA-accepted} Zh.-H. Yang and Y.-M. Chu, A monotonicity
property involving the generalized elliptic integrals of the first kind,
Math. Inequal. Appl. Be accepted.

\bibitem{Yang-AAA-2014-702718} Zh.-H. Yang, Y.-M. Chu, and X.-J, Tao, A
double inequality for the trigamma function and its applications, Abstr.
Appl. Anal. \textbf{2014} (2014), Art. ID 702718, 9 pages,
http://dx.doi.org/10.1155/2014/702718.

\bibitem{Yang-JIA-299-2015} Zh.-H. Yang and Y.-M. Chu, Inequalities for
certain means in two arguments, J. Inequal. Appl. \textbf{2015} (2015): 299,
doi:10.1186/s13660-015-0828-8.
\end{thebibliography}
\end{document}